\documentclass[11pt]{amsart}
\usepackage{amsmath, amsthm, amssymb, amsfonts}
\usepackage{mathtools}
\usepackage{subcaption}
\usepackage[normalem]{ulem}
\usepackage[numbers]{natbib}
\usepackage{graphicx}
\usepackage{booktabs, array}
\usepackage{xcolor}
\usepackage[foot]{amsaddr}
\usepackage{tabularx}
\usepackage{hyperref}
\usepackage{tikz}    
\usepackage{tikz-cd}    
\usetikzlibrary{arrows.meta}
\hypersetup{
    colorlinks=true,
    citecolor=green!50!black,
    linkcolor=red!80!black,
    filecolor=blue,
    urlcolor=blue!70!black,
}
\usepackage[normalem]{ulem}
\usepackage{comment}
\usepackage[margin=1.5in]{geometry}

\newtheorem{theorem}{Theorem}[section]
\newtheorem{proposition}[theorem]{Proposition}

\newtheorem{lemma}[theorem]{Lemma}
\theoremstyle{definition}
\newtheorem{remark}[theorem]{Remark}
\newtheorem{example}[theorem]{Example}
\newtheorem{definition}[theorem]{Definition}

\newcommand{\arxivonly}[1]{#1}
\newcommand{\journalonly}[1]{}

\newcommand{\Symp}{\mathrm{Symp}}
\newcommand{\vect}{\mathrm{vect}}
\newcommand{\symp}{\mathrm{symp}}
\newcommand{\ham}{\mathrm{ham}}
\newcommand{\Ham}{\mathrm{Ham}}
\newcommand{\Aug}{\mathcal{ FD}}
\newcommand{\Tri}{\mathcal T\!ri}

\title[Generalized Marsden--Weinstein symplectic structures]{Generalized Marsden--Weinstein\\symplectic structures}

\author{Boris Khesin${^{1\journalonly{,*}}}$}
\address{${^1}$Department of Mathematics, University of Toronto, ON M5S 2E4, Canada}
\arxivonly{
\email{khesin@math.toronto.edu}
}
\journalonly{
\thanks{${}^*$Corresponding author, \texttt{klas.modin@chalmers.se}, \href{https://orcid.org/0000-0001-6900-1122}{ORCID:0000-0001-6900-1122}}
}

\author{Klas Modin${^{2}}$}
\address{${^2}$Department of Mathematical Sciences, Chalmers University of Technology and University of Gothenburg, SE-412~96 Gothenburg, Sweden}
\arxivonly{
\email{klas.modin@chalmers.se}
}
\author{Cornelia Vizman${^{3}}$}
\address{${^3}$Department of Mathematics, West University of Timi\c soara, Bd. V.Parvan 4, 300223 Timi\c soara, Romania}
\arxivonly{
\email{cornelia.vizman@e-uvt.ro}
}

\subjclass[2020]{53D20, 76M60, 76B47}

\date{\today}

\renewcommand{\O}{\mathcal{O}}

\newcommand{\Sub}{\operatorname{Sub}}
\newcommand{\Emb}{\operatorname{Emb}}
\newcommand{\Diff}{\operatorname{Diff}}

\begin{document}

 \dedicatory{To the memory of Vladimir Igorevich Arnold}

\begin{abstract}
We define a symplectic structure on singular 1-form densities and 
a presymplectic structure on augmented triples, a compressible analog of the 
Marsden--Weinstein symplectic structure on vortex membranes in the incompressible case. 
Equivalence classes of  augmented triples  can be regarded as
special vorticities in compressible fluids, i.e., singular elements of the  dual  to the Lie algebra of the group of all diffeomorphisms of a manifold, while Marsden--Weinstein symplectic structure is related to singular vorticities for the group of volume-preserving diffeomorphisms. This  structure on singular 1-form densities occupies an intermediate position between 
two other settings, which we revisit,  the Weinstein symplectic structure on weighted isotropic submanifolds 
of symplectic manifolds and the Marsden--Weinstein symplectic structure on vortex membranes.
\end{abstract}
\date{}
\maketitle

\arxivonly{
\vspace{-2ex}
}

\journalonly{
\subsection*{Statements and Declarations}
The authors have no relevant financial or non-financial interests to disclose.
Data sharing is not applicable to this article as no datasets were generated or analyzed during the current study.

\subsection*{Acknowledgements}
This work was supported by the Swedish Research Council (grant number 2022-03453), the Knut and Alice Wallenberg Foundation (grant numbers WAF2019.0201 and KAW2020.0287), and the Göran Gustafsson Foundation for Research in Natural Sciences and Medicine.
The computations were enabled by resources provided by Chalmers e-Commons at Chalmers.
The data handling was enabled by resources provided by the National Academic Infrastructure for Supercomputing in Sweden (NAISS), partially funded by the Swedish Research Council through grant agreement no.~2022-06725.
}

\section{Introduction}

The Marsden--Weinstein symplectic structure on embedded closed curves  in $\mathbb R^3$, i.e.,
the so called symplectic structure on knots, arises in many settings in geometry, physics, and PDEs.
For instance, the binormal (or vortex filament) equation, or its equivalent counterpart, the 1D non-linear Schrödinger (NLS) equation, describes a Hamiltonian evolution on the space of singular knots  
with respect to this symplectic structure for the Hamiltonian function given by the curve's length. 

The beauty and simplicity of the Marsden--Weinstein symplectic structure  stimulated various generalizations.
It was extended to the space of vortex membranes in incompressible fluid, the codimension 2 submanifolds 
in a manifold of any dimension equipped with a volume form. There exists an analog of this structure for vortex sheets, understood as codimension 1 submanifolds equipped with a closed 1-form \cite{Kh2012}. 
This symplectic structure 
is  related to an ideal fluid dynamics: it coincides with the Kirillov--Kostant symplectic structure on coadjoint orbits of special ``singular" vorticities in the dual space of the Lie algebra of divergence-free 
vector fields \cite{MW1983}. 
In turn, Weinstein showed that in a similar way one can define a symplectic structure on the space of weighted isotropic submanifolds in a symplectic manifold. The latter can be viewed as singular vorticity elements  
in a symplectic fluid \cite{Kh2012Gafa}.

In this paper we define and 
discuss properties of the symplectic structure on yet another space, that of 
singular vorticities in a compressible fluid, see Section~\ref{sec:triples}.
These are given by the singular 1-form densities defined in  \cite{HV2025}, where also their coadjoint orbits for nowhere vanishing 
1-forms
were studied in detail from the point of view of symplectic reduction and dual pairs. 

Arbitrary singular 1-form densities  can be represented by  (an equivalence class of) augmented triples, consisting of a compact weighted submanifold of any (co)dimension equipped with a 1-form in any manifold. 
As we will show, the symplectic form on 1-form densities, in a sense, occupies an intermediate position between 
the Marsden--Weinstein and Weinstein symplectic structures. 
It naturally coincides with the symplectic structure on special coadjoint orbits of the full diffeomorphism group of the ambient manifold. 
To describe it, we revisit both structures and highlight their similarities and differences, so the paper is also a survey of singular coadjoint orbits in fluids. 

We summarize various singular objects on which symplectic structures are defined below in  Table~\ref{table}.

{\small 
\begin{table}[t]
\centering
\renewcommand{\arraystretch}{1.5}
\setlength{\tabcolsep}{5pt}
\begin{tabular}{>{\raggedright\arraybackslash}p{0.33\textwidth}>{\raggedright\arraybackslash}p{0.25\textwidth}>{\raggedright\arraybackslash}p{0.34\textwidth}}
\toprule
\textbf{Type of symplectic structure and fluid} & \textbf{(Co)dimension of submanifold} & \textbf{Singular current in an ambient manifold} \\
\midrule
Marsden--Weinstein s/s on membranes, incompressible fluid, \S\ref{sec:MW}& codimension 2 & submanifold $\Sigma\subset S$\\ 
Marsden--Weinstein-type s/s on vortex sheets, incompressible fluid, \S\ref{sec:MW}& codimension 1 & pair $(P,\beta)$ 
with 1-form $\beta$ on submanifold $P\subset S$\\
Weinstein s/s on isodrastic submanifolds, symplectic fluid, \S\ref{sec:Weinstein}
& dimension $k\le n$ & weighted isotropic $(\Gamma, \rho)$ in symplectic $(M^{2n}, \omega)$ with $k$-form $\rho$ on $\Gamma$\\
generalized s/s on singular 1-form densities, compressible fluid, \S\ref{sec:triples}
& dimension $k\le n$& singular 1-form density $(\Gamma, \alpha\otimes\rho)$ in any $Q^n$ with $k$-form $\rho$ on $\Gamma$ and 1-form $\alpha\in \Omega^1_\Gamma(Q)$  defined on $\Gamma\subset Q$\\
\bottomrule
\end{tabular}

\caption{Types of singular currents for symplectic structures (s/s) discussed in the paper.}
\label{table}
\end{table}
}

Also the exposition of many structures can be set into a universal framework of mappings of a manifold
with a volume form to a symplectic manifold or dual pairs, described by Donaldson \cite{Do1999} and Marsden--Weinstein \cite{MW1983}. 
In Section \ref{sec:binormal} we discuss open questions related to other groups and possible generalizations of the binormal equation to the setting of triples. Finally, in Appendix~\ref{sect:reduction} we show how the formalize the Marsden--Weinstein  symplectic structures on vortex sheets and vortex membranes within the Donaldson framework of submersions of a manifold with volume forms to a surface. In particular, the Donaldson
 setting is applied here to the case, where the target symplectic manifold is itself infinite-dimensional, the space of membranes equipped with the Marsden--Weinstein   structure. 

\bigskip

\noindent\textbf{Acknowledgements.} The authors are grateful to  Ood Shabtai and Luke Volk for helpful discussions.
B.K. was partially supported by an NSERC Discovery Grant. K.M. was supported by the Swedish Research Council (grant number 2022-03453), the Knut and Alice Wallenberg Foundation (grant numbers WAF2019.0201), and the Göran Gustafsson Foundation for Research in Natural Sciences and Medicine.
C.V. gratefully acknowledges the hospitality and financial support of the Erwin Schrödinger International Institute for Mathematics and Physics,
specifically during the thematic program ``Infinite-dimensional Geometry: Theory and Applications''.


\section{Donaldson's setting}\label{sec:Donaldson}

\subsection{Symplectic structure on the space of maps}
Let $S$ be a compact $k$-dimensional manifold equipped with a volume form $\sigma$, and  let $(M,\omega)$ be a symplectic manifold.
Consider the space $\mathcal M$ of smooth maps $S\to M$ within a fixed homotopy class.
As an infinite-dimensional manifold, the tangent space $T_f\mathcal M$ consists of smooth sections of the pullback bundle $f^*(TM)$, i.e., maps $u\colon S\to TM$ such that $u(s) \in T_{f(s)}M$ for any $s\in S$. We are interested in the local structure of $\mathcal M$ near its nonsingular point.

\begin{definition}[Donaldson~\cite{Do1999}]
The infinite-dimensional  manifold $\mathcal M$ can be equipped with the following symplectic structure
\begin{equation}\label{eq:Omega_f}
    \Omega_f(u,v) 
    = \int_S \omega_{f(s)}(u(s),v(s))\sigma(s).
\end{equation}
\end{definition}

\begin{remark}
It is non-degenerate on $\mathcal M$ since it is defined point-wise and since $\omega$ is non-degenerate
on the manifold $M$. One can also show that it is  closed \cite{Do1999}.  
Indeed, the  symplectic structure on sections of $f^*(TM)$ is the averaged symplectic structure in the fibers weighted by $\sigma$, from which its closedness follows. 
\end{remark}

The group $\mathcal G = \mathrm{Diff}_\sigma(S)$ acts on $\mathcal M$ from the right.
The lifted action on $T\mathcal M$ is also just by composition from the right.

\begin{lemma}
    The action of $\mathcal G$ on $(\mathcal M,\Omega)$ is symplectic.
\end{lemma}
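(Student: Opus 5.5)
The plan is to verify directly that for every $\varphi\in\mathcal G=\mathrm{Diff}_\sigma(S)$ the map $R_\varphi\colon\mathcal M\to\mathcal M$, $f\mapsto f\circ\varphi$, satisfies $R_\varphi^*\Omega=\Omega$. First we identify the tangent map. If $f_t$ is a curve in $\mathcal M$ with $f_0=f$ and $\frac{d}{dt}\big|_{t=0}f_t=u$, then $R_\varphi(f_t)=f_t\circ\varphi$, so $dR_\varphi(f)\,u=u\circ\varphi$. This is a section of $(f\circ\varphi)^*(TM)$, because $u(\varphi(s))\in T_{f(\varphi(s))}M$. This makes precise the remark that the lifted action is again composition from the right.

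Next we compute the pulled-back form at $f$ for $u,v\in T_f\mathcal M$:
\begin{equation*}
(R_\varphi^*\Omega)_f(u,v)=\Omega_{f\circ\varphi}(u\circ\varphi,v\circ\varphi)=\int_S \omega_{f(\varphi(s))}\bigl(u(\varphi(s)),v(\varphi(s))\bigr)\,\sigma(s).
\end{equation*}
The integrand is $\varphi^*\bigl(F\,\sigma\bigr)$ with $F(s')=\omega_{f(s')}(u(s'),v(s'))$, since $\varphi^*\sigma=\sigma$. We then apply the change of variables formula $\int_S\varphi^*\eta=\int_S\eta$ for a top-degree form $\eta$ and an orientation-preserving diffeomorphism $\varphi$ of the compact manifold $S$. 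This gives $\int_S F\,\sigma=\Omega_f(u,v)$, which is the claim.

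The only subtle point is orientation. A volume-preserving $\varphi$ satisfies $\varphi^*\sigma=\sigma$ with $\sigma$ a volume form, so it automatically preserves the orientation defined by $\sigma$, and the change of variables formula applies without a sign. If $\sigma$ is understood as a density on a possibly non-orientable $S$, the same formula holds for densities with no orientation condition. Smoothness issues in the infinite-dimensional setting play no role, because the identity is verified pointwise on each tangent space.
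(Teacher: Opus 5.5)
Your proposal is correct and follows essentially the same argument as the paper: identify the lifted action as right composition, use $\varphi^*\sigma=\sigma$ to write the integrand as the pullback of $\omega(u,v)\sigma$, and conclude by the change of variables formula. Your explicit check that a volume-preserving $\varphi$ preserves orientation, so that no sign appears, is a detail the paper leaves implicit.
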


Indeed, for any $\varphi\in \mathcal G$ one has
    \begin{multline*}
        \Omega_{f\circ\varphi}(u\circ\varphi,v\circ\varphi) = 
        \int_S \omega(u\circ\varphi,v\circ\varphi)\sigma = \\ = \int_S \omega(u\circ\varphi,v\circ\varphi)\varphi^*\sigma
        = \int_S \varphi_*(\omega(u,v)\sigma) = \Omega_f(u,v).
    \end{multline*}
\smallskip

Now let $\mathfrak{g} = \vect_{\sigma}(S)$ denote the Lie algebra of the Lie group $\mathcal G$.
Recall that its (smooth) dual is $\Omega^1(S)/d\Omega^0(S)$, where $\Omega^k(S)$ is the space of smooth $k$-forms on $S$.
We now recall the momentum map (following Donaldson \cite{Do1999}).

\begin{definition}\label{def:momentum_map}
    A \emph{momentum map} for the action of $\mathcal G$ on $\mathcal M$ is a mapping $\mu\colon \mathcal M \to \mathfrak{g}^*$ such that for every $\xi\in \mathfrak{g}$
    \begin{equation*}
        d\langle \mu(\cdot),\xi\rangle = \iota_{X_\xi}\Omega
    \end{equation*}
    where $X_\xi$ is the vector field corresponding to the infinitesimal action of $\xi$ on $\mathcal M$.
    A momentum map $\mu$ is called \emph{equivariant} if for every $f\in\mathcal M$ and every $\varphi\in \mathcal G$
    \begin{equation*}
        \mu(f\circ\varphi) = \operatorname{Ad}_\varphi^*\mu(f).
    \end{equation*}
\end{definition}

\begin{proposition}[Donaldson~\cite{Do1999}]
    If $S$ has trivial first cohomology, and, for $f\in \mathcal M$, the pullback 2-form $f^*\omega$ is in the vanishing second cohomology class of $S$, then a momentum map for the volume-preserving diffeomorphism action is given by $\mu(f) = [a]\in \Omega^1(S)/d\Omega^0(S)=\mathfrak X^*_{\sigma}(S)$, where $f^*\omega = d a$. 
\end{proposition}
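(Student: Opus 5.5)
We first check that $\mu$ is well defined, then verify the defining identity of Definition~\ref{def:momentum_map} by a direct computation on a path of maps. Since $f^*\omega$ is exact by hypothesis, a primitive $a$ with $da=f^*\omega$ exists. Two primitives differ by a closed 1-form, which is exact because $H^1(S)=0$. Hence $[a]\in\Omega^1(S)/d\Omega^0(S)$ depends only on $f$. The pairing with $\mathfrak g=\vect_\sigma(S)$ is $\langle [a],\xi\rangle=\int_S \iota_\xi a\,\sigma$. This is independent of the representative, since $\int_S \xi(h)\,\sigma=\int_S L_\xi(h\sigma)=0$ for divergence-free $\xi$.

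Next we compute the infinitesimal action. The right action $f\mapsto f\circ\varphi_t$ has generator $X_\xi(f)=df\circ\xi$, i.e.\ $u_\xi(s)=df_s(\xi(s))$. Then
\[
\iota_{X_\xi}\Omega_f(v)=\int_S \omega(df\,\xi, v)\,\sigma .
\]
On the other side, take a path $f_t$ with $f_0=f$ and $\dot f_0=v$, and choose primitives $a_t$ of $f_t^*\omega$ depending smoothly on $t$. By the homotopy formula (Cartan applied to $F\colon[0,1]\times S\to M$), $\frac{d}{dt}f_t^*\omega = d\big(f_t^*\iota_{v}\omega\big)$, where $f_t^*\iota_v\omega$ denotes the 1-form $w\mapsto \omega(v,df_t w)$ on $S$. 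This uses $d\omega=0$. Hence $\dot a_0 = \omega(v,df\,\cdot) + (\text{closed}) = \omega(v,df\,\cdot)+dh$, using $H^1(S)=0$ again.

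Therefore
\[
d\langle\mu,\xi\rangle_f(v)=\int_S \iota_\xi\dot a_0\,\sigma=\int_S \omega(v,df\,\xi)\,\sigma+\int_S\xi(h)\sigma .
\]
The last term vanishes as before. The first term equals $\iota_{X_\xi}\Omega_f(v)$ up to the sign convention for $\iota_{X_\xi}\Omega$ and the sign of the left action. The remaining work is to fix conventions consistently (right action versus the minus sign in $\mathfrak g^*$), possibly replacing $a$ by $-a$ or $\xi$ by $-\xi$.

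The step I expect to be the main obstacle is the smooth choice of primitives $a_t$ along the path. It will be handled by a concrete formula: set $a_t=a_0+\int_0^t f_\tau^*\iota_{\dot f_\tau}\omega\,d\tau$. Its exterior derivative is $f_0^*\omega+\int_0^t\frac{d}{d\tau}f_\tau^*\omega\,d\tau=f_t^*\omega$, by the homotopy formula above. This avoids any appeal to Hodge theory. If equivariance is desired as well, it follows from naturality: $(f\circ\varphi)^*\omega=\varphi^*da=d(\varphi^*a)$, so $\mu(f\circ\varphi)=[\varphi^*a]=\operatorname{Ad}^*_\varphi\mu(f)$.
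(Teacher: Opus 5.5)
The paper gives no proof of this proposition; it is quoted from Donaldson. Your direct first-variation argument is correct. The well-definedness check via $H^1(S)=0$ and $\int_S\xi(h)\,\sigma=0$ is fine. So are the homotopy formula $\frac{d}{dt}f_t^*\omega=d\bigl(\omega(\dot f_t,df_t\,\cdot)\bigr)$, your explicit smooth family $a_t$, and the equivariance check $[\varphi^*a]=\operatorname{Ad}^*_\varphi[a]$.

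The one thing you should not leave open is the sign, because your own formulas already fix it. With $X_\xi(f)=df\circ\xi$ and $\iota_X\Omega=\Omega(X,\cdot)$ you get exactly
\[
d\langle [a],\xi\rangle_f(v)=\int_S\omega(v,df\,\xi)\,\sigma=-(\iota_{X_\xi}\Omega)_f(v).
\]
So the identity of Definition~\ref{def:momentum_map} holds literally in one of two ways:
\begin{itemize}
\item with $\mu(f)=-[a]$, or
\item with $\mu(f)=[a]$, if $X_\xi$ is taken to be $-df\circ\xi$, the generator of the left action $f\mapsto f\circ\varphi^{-1}$.
\end{itemize}
Equivariance is unaffected by this choice. ``Replacing $\xi$ by $-\xi$'' is not a separate option; it is the second choice in disguise. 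Since the statement is only meaningful up to this convention, this is bookkeeping rather than a gap, but commit to one choice instead of hedging.

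The closest argument in the paper is in the Appendix, for proper submersions onto a surface. There the same momentum map is written as $\langle J(f),v\rangle=\int_S\alpha\wedge f^*\omega$ with $\iota_v\sigma=d\alpha$; see \eqref{jsub}. This agrees with your $\int_S\iota_v a\,\sigma$ after one integration by parts, since $(\iota_v a)\,\sigma=a\wedge\iota_v\sigma$. The paper obtains it structurally, as the composite $A\circ\Sigma$ of the symplectic equivariant map of Theorem~\ref{PS} with the averaged Marsden--Weinstein momentum map of Proposition~\ref{LG}, followed by Fubini.

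Your route is more elementary and more general: it covers arbitrary maps and targets of any dimension. Note that the potential form of the pairing would remove what you call the main obstacle. The map $f\mapsto\int_S\alpha\wedge f^*\omega$ is manifestly smooth, so no primitives $a_t$ are needed, and its derivative is $\int_S\alpha\wedge d\bigl(\omega(v,df\,\cdot)\bigr)=\int_S\omega(v,df\,\xi)\,\sigma$ directly. The price is that $\iota_\xi\sigma$ must be exact. This follows from $H^1(S)=0$ by Poincar\'e duality on the compact oriented manifold $S$.
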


In what follows we will always assume that $S$ has zero first cohomology to make simpler the consideration of various symplectic structures on its submanifolds. 


\subsection{Zero-momentum symplectic reduction}

Recall the zero-momentum reduction
\begin{equation*}
    \mathcal M/\!/\mathcal G = \mu^{-1}(0)/\mathcal G.
\end{equation*}

    In the case of embeddings $S\to M$, the space $\mathcal M/\!/\mathcal G$ can be identified with the space
    of isotropic weighted submanifolds in $M$, see Donaldson~\cite{Do1999}.
    Indeed, the zero momentum implies that the symplectic structure vanishes on the pre-images, which stands for the isotropic property.

\begin{example}\label{ex:zero-dim}
    Let $S$ be zero-dimensional with $N$ components and density corresponding to strength coefficients $\kappa_1,\ldots,\kappa_N\in \mathbb{R}$.
    Then $\mathcal M \simeq M^N$ and the symplectic structure is
    \begin{equation*}
        \Omega\big((u_1,\ldots,u_N),(v_1,\ldots,v_N) \big) = \sum_{i=1}^N \kappa_i \omega(u_i,v_i).
    \end{equation*}
    The identity component of $\mathcal G$ is just one point, so its Lie algebra is $\{0\}$.
    Thus, $\mathcal M/\!/\mathcal G = \mathcal M$ in this case.
\end{example}

\begin{example}
    Let $2\operatorname{dim}S \leq \operatorname{dim}M$ and consider the open subset of embeddings $\mathrm{Emb}(S,M) \subset \mathcal M$.
    Then $f\in\mu^{-1}([0])$ implies that the submanifold $\Gamma = f(S)\subset M$ is isotropic, i.e., $f^*\omega = 0$.
    Two elements $f,f'\in \mu^{-1}([0])$ belong to the same coset in $\mu^{-1}(0)/\mathcal G$ if and only if $f' = f \circ \varphi$ for some $\varphi\in \mathrm{Diff}_\sigma(S)$.
    Since $f$ is a diffeomorphism between $S$ and $\Gamma$, we obtain a volume form $\varrho = f_*\sigma$ on $\Gamma$. 
    Thus, the space $\mathcal M/\!/\mathcal G$ is identified with 
    the submanifold $\Gamma$ as a base with fiber $\mathrm{Diff}(\Gamma)/\mathrm{Diff}_\varrho(\Gamma) = \mathrm{Dens}(\Gamma)$.
    Thus, from the symplectic structure on the  space $\mathcal M$ we obtain a symplectic structure on the fiber bundle $\mathcal E$ of volume forms sitting over isotropic submanifolds in the symplectic manifold $M$ (Weinstein~\cite{We1990} calls such a pair $(\Gamma, \varrho)\in \mathcal E$ a \emph{weighted isotropic submanifold}). We discuss this symplectic structure in the next section.
\end{example}

\begin{example}
    At the other extreme, consider the case when $\operatorname{dim} S > \operatorname{dim}M$ and we take the open subset of submersions $\mathrm{Sub}(S,M)\subset \mathcal M$.
    Then $f \in \mu^{-1}([0])$ implies that $f^*\omega = 0$ which cannot happen if $f$ is a submersion.
    Thus, to obtain a nonempty set in the preimage of $\mu$ one needs to consider the reduction at general momentum $[a]\in \mathfrak{g}^*$, namely
    \begin{equation*}
        \mu^{-1}([a])/\mathcal G_{[a]} \simeq \mu^{-1}(\operatorname{Ad}_{\mathcal G}^*[a])/\mathcal G\quad \text{where}\quad\mathcal G_{[a]} = \{\varphi\in \mathcal G\mid \varphi^* da = da \}.
    \end{equation*}
    The space $\mu^{-1}([a])$ consists of all $f$ such that $f^*\omega = da$.
\end{example}

\begin{remark}
 The setting of the symplectic structure on the space $\mathcal M$ of smooth maps $S\to M$
 is the framework of the dual pair: one has the actions of volume preserving diffeomorphisms $\mathrm{Diff}_\sigma(S)$ 
 on the right and Hamiltonian diffeomorphisms $\Ham(M)$ on the left, and hence one consider two complimentary momentum maps
 \begin{equation}\label{dupa}
     \ham(M)^* \longleftarrow \mathcal{M} \longrightarrow \mathrm{vect}_\sigma(S)^*.
 \end{equation}
 The corresponding dual pairs were discussed by Marsden and Weinstein~\cite{MW1983} and by Gay-Balmaz and Vizman~\cite{GBV12}.  
\end{remark}


\section{Weinstein's setting}\label{sec:Weinstein}

Donaldson's setting from the previous section, restricted to the special case when the space of mappings $S\to M$ are embeddings, provides a natural framework for the following symplectic structure defined by Weinstein~\cite{We1990}.

Let $(M,\omega)$ be a symplectic manifold as before and let $(\Gamma,\rho)$ a weighted isotropic submanifold in $M$.
In other words, $\Gamma\subset M$ is a compact oriented $k$-dimensional isotropic submanifold and $\rho$ is a  $k$-form on $\Gamma$, with $k\le \dim M/2$.
Now regard $(\Gamma,\rho)$ as a linear functional on the Lie algebra of Hamiltonian functions on $M$.
Namely, let $\ham(M)=C^\infty(M)$ be the Lie algebra of Hamiltonians with respect 
to the Poisson bracket on $M$.\footnote{For a noncompact $M$, the Hamiltonians are assumed to be fast-decaying at infinity.}
The (smooth) dual space is $\ham^*(M)=\Omega^{2n}(M)$ and the weighted submanifold $(\Gamma, \rho)$ gives the singular linear functional on $\ham(M)$
$$\langle (\Gamma, \rho), h\rangle:=\int_\Gamma h\,\rho\,,
$$
where $h$ is a  Hamiltonian function
(of a Hamiltonian vector field $v_h$ on $M$). Note that one can regard $(\Gamma, \rho)$ as a distribution $\rho\wedge\delta_\Gamma$ on $M$, so that the pairing becomes natural. Thus, $(\Gamma, \rho)$ belongs to the closure $\overline{\ham}^*(M)$, in the sense of distributions, of the smooth dual space $\ham^*(M)$. 
Below we assume that $\rho$ is nowhere zero, i.e., it is a volume form on $\Gamma$.
\medskip

Consider the coadjoint orbit $\mathcal O_{(\Gamma, \rho)}$ of $(\Gamma, \rho)\in \ham^*(M)$. There is a simple description of a
neighborhood of $(\Gamma, \rho)$ provided that $\Gamma\subset (M, \omega) $ is isotropic, i.e. $\omega|_\Gamma=0$. Indeed, in this case, by Darboux--Weinstein theorem, a neighborhood of $\Gamma$ in $M$ is a direct product, and is symplectomorphic to the neighborhood in the normal bundle of $\Gamma\subset M$,  while all $C^1$-close isotropic submanifolds in a neighborhood of $\Gamma$ can be obtained by a symplectic deformation of $\Gamma$.
Consider the space ${\mathcal Is}$ of all weighted {\it isodrastic submanifolds}  $(\Gamma', \rho')$ near $\Gamma$, i.e., those submanifolds $\Gamma'$ that are obtained from isotropic submanifold $\Gamma$ by a $C^1$-small Hamiltonian isotopy,  and with $\rho'$ $C^1$-close to $\rho$ and whose total volumes are equal: $\int_{\Gamma'}\rho'=\int_\Gamma\rho$. (As described in \cite{We1990}, 
isodrastic submanifolds can be characterized by constancy of the corresponding actions integrals.)

\begin{proposition}[\cite{We1990,Lee_2009}]\label{p31}
    The Weinstein symplectic structure on the space ${\mathcal Is}$ near $(\Gamma, \rho)$ coincides 
with the Kirillov--Kostant symplectic structure on the coadjoint orbit $\mathcal O_{(\Gamma, \rho)}\subset \overline{\ham}^*(M)$.
Namely, for two Hamiltonian functions $H_\xi$ and $H_\eta$ for the Hamiltonian vector fields $\xi$ and $\eta$ on $(M, \omega)$ defined in a neighborhood of $\Gamma$, 
\begin{equation}\label{wein}
    \Omega^W_{(\Gamma, \rho)}(\xi, \eta) = \int_{\Gamma} \{H_\xi, H_\eta\} \,\rho=\int_{\Gamma} \omega(\xi, \eta) \,\rho\,.
\end{equation}
\end{proposition}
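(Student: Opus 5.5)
The plan is to compute the Kirillov--Kostant form at the point $(\Gamma,\rho)\in\overline{\ham}^*(M)$ directly, then compare it with the Weinstein form obtained by restricting Donaldson's form \eqref{eq:Omega_f} to the reduced space. Recall that the coadjoint orbit form is
\[
\Omega^{KK}_{\lambda}(\mathrm{ad}^*_{H_\xi}\lambda,\mathrm{ad}^*_{H_\eta}\lambda)=\langle\lambda,\{H_\xi,H_\eta\}\rangle.
\]
Here the Lie algebra is $\ham(M)$ with the Poisson bracket, and it is integrated by the group $\Ham(M)$.

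\textbf{Step 1: identify tangent spaces.} First I will show that the tangent space to the orbit $\mathcal O_{(\Gamma,\rho)}$ at $(\Gamma,\rho)$ is the tangent space to $\mathcal Is$. The coadjoint action of $\Ham(M)$ on $\rho\wedge\delta_\Gamma$ is push-forward: $\varphi\cdot(\Gamma,\rho)=(\varphi(\Gamma),\varphi_*\rho)$. Infinitesimally, a Hamiltonian field $\xi=v_{H_\xi}$ moves $\Gamma$ by the normal component of $\xi|_\Gamma$ and changes $\rho$ by the Lie derivative along the tangential component. This derivative is exact, which preserves the total volume.

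Conversely, I need every infinitesimal isodrastic deformation with volume-preserving weight change to arise this way. By the Darboux--Weinstein theorem quoted above, a neighbourhood of $\Gamma$ is the normal bundle model. An isodrastic deformation is generated by a Hamiltonian isotopy by definition. A change $\dot\rho=d\beta$ with $\int_\Gamma\dot\rho=0$ (Moser) is realised by a tangent vector field $w$ with $\iota_w\rho=\beta$. I then extend $w$ to a Hamiltonian field near $\Gamma$: isotropy of $\Gamma$ makes $T\Gamma\to T^*\Gamma$ via $\omega$ land in the conormal directions, so the function on the normal model that is linear in the fibres and given by $\iota_w$ has $w$ as its Hamiltonian field along $\Gamma$. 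This shows the orbit near $(\Gamma,\rho)$ is $\mathcal Is$, hence the two forms live on the same space.

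\textbf{Step 2: compare values.} Pairing gives $\langle(\Gamma,\rho),\{H_\xi,H_\eta\}\rangle=\int_\Gamma\{H_\xi,H_\eta\}\rho$. Since $\{H_\xi,H_\eta\}=\omega(\xi,\eta)$ pointwise (up to the sign convention), the two expressions in \eqref{wein} agree. It remains to see that this is Weinstein's form, i.e.\ the form on $\mathcal M/\!/\mathcal G$ induced from Donaldson's \eqref{eq:Omega_f} for the embedding $f\colon S\to\Gamma$ with $\rho=f_*\sigma$. The tangent vectors $u=\xi\circ f$ and $v=\eta\circ f$ represent these tangent directions. Then
\[
\Omega_f(u,v)=\int_S\omega(\xi,\eta)\circ f\,\sigma=\int_\Gamma\omega(\xi,\eta)\rho,
\]
by change of variables.

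\textbf{Step 3: well-definedness.} The main obstacle is to check that the result depends only on the induced tangent vectors to $\mathcal Is$, not on the choice of $H_\xi,H_\eta$. Suppose $\xi$ acts trivially, meaning it is tangent to $\Gamma$ and $L_\xi\rho=0$ on $\Gamma$. Since $\Gamma$ is isotropic, only the normal part of $\eta$ can contribute, and $\omega(\xi,\eta)=-dH_\eta(\xi)$ along $\Gamma$. Hence
\[
\int_\Gamma\omega(\xi,\eta)\rho=-\int_\Gamma (L_\xi H_\eta)\rho=\int_\Gamma H_\eta L_\xi\rho=0.
\]
This is exactly the degeneracy of Donaldson's form along $\mathcal G$-orbits inside $\mu^{-1}(0)$. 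Closedness and nondegeneracy then follow from the Kirillov--Kostant side, which completes the identification.
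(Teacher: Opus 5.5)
Your argument is correct and follows essentially the route the paper relies on. The paper gives no proof of its own (it defers to Weinstein and Lee). It identifies Weinstein's form with Donaldson's reduced form on $\mu^{-1}(0)/\mathcal G$ in Section~\ref{sec:Donaldson}, and it reads \eqref{wein} as the Kirillov--Kostant pairing $\langle(\Gamma,\rho),\{H_\xi,H_\eta\}\rangle$ combined with $\{H_\xi,H_\eta\}=\omega(\xi,\eta)$; that is your Step~2, while your Steps~1 and~3 supply the tangent-space identification and well-definedness the paper leaves to the references.

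One small correction: the Moser step needs $\int\dot\rho=0$ on each connected component of $\Gamma$, which is what both the coadjoint orbit and Donaldson's reduced space preserve. So for disconnected $\Gamma$, the equal-total-volume condition defining $\mathcal Is$ has to be read componentwise.
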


Note that the bracket at $ (\Gamma, \rho)$ is defined by the values of the vector fields $\xi$ and $ \eta$ on $\Gamma$ itself, rather than in a neighborhood of $\Gamma$.
The symplectic structure is defined on the space of isotropic submanifiolds in the vicinity of $\Gamma$ of the same dimension $k$, where  $0\le k\le n=\dim M/2$. 
While the bracket can be defined by the same formula on not necessarily isotropic submanifolds of any dimension, the set of the nearby submanifolds  obtained from $\Gamma$ by a Hamiltonian isotopy is more difficult to describe, as the existence of a Hamiltonian deformation implies that nearby submanifolds must have symplectomorphic restrictions to them  of the symplectic structure $\omega$. 

We mention that the infinite-dimensional geometric setting of convenient calculus was developed for Weinstein's symplectic structure by Lee~\cite{Lee_2009}.


\section{Marsden--Weinstein symplectic structures}\label{sec:MW}

\subsection{Membranes (codimension 2 submanifolds)}
Recall the definition of the Marsden--Weinstein symplectic structure on the space $\mathcal{C}$ of codimension 2 oriented compact submanifolds of a manifold with a volume form.
Thus,
$\Sigma\in \mathcal{C}$ is an oriented, 0-homologous, codimension 2 submanifold\footnote{There are versions of this theory for more general submanifolds. For instance, for non-zero-homologous submanifolds one has to pass to central extensions of the diffeomorphism groups.}
 (also called a membrane) of a manifold $S$ equipped with a volume form~$\sigma$. 
Now regard~$\Sigma$ as a linear functional (singular in a sense of distributions) on the space $\vect_\sigma(S)$ of exact divergence-free vector fields in $S$. Namely, 
given $v\in \vect_\sigma(S)$, the pairing is 
\begin{equation}\label{flux}
\langle \Sigma, v\rangle = {\rm Flux}\, v|_\Sigma = \int_{\partial^{-1}\Sigma} i_v \sigma\,,
\end{equation}
where $\partial^{-1}\Sigma\subset S$ stands for any oriented submanifold whose (compatibly oriented) boundary is $\Sigma$.
The pairing does not depend on the choice of this submanifold. 
That is, $\Sigma\in \overline{\vect}^*_\sigma(S)$, where $\overline{\vect}^*_\sigma(S)$ is the space of currents given by the completion of the corresponding smooth dual $\vect^*_\sigma(S)$.

\begin{definition}[\cite{MW1983}] The {\it Marsden--Weinstein symplectic structure} on $\mathcal{C}$ is defined by the formula
$$
    \Omega^{MW}_\Sigma(U, V) := \int_\Sigma i_Ui_V\,\sigma
$$
where two tangent vectors $U,V$  are  regarded as a pair of variations of the membrane $\Sigma\subset S$, i.e., two vector fields in $S$ defined only on $\Sigma$.
If the manifold $S$ is of dimension $n$, the submanifold $\Sigma$ is of dimension $n-2$, over which one integrates the $(n-2)$-form $i_Ui_V\,\sigma$ defined on $\Sigma$.
\end{definition}

For the fields $U$ and $V$ defined on $\Sigma\subset S$, consider now an arbitrary extension to divergence-free vector fields $u$ and $v$ on $(S, \sigma)$. 

\begin{proposition}\label{prop:MWvsKK}
The Marsden--Weinstein symplectic structure on $\mathcal{C}$ coincides with the Kirillov--Kostant symplectic structure 
on the coadjoint orbit $\mathcal O_\Sigma\subset \overline{\vect}^*_\sigma(S)$.
\end{proposition}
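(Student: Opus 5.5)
The plan is to compute both forms on tangent vectors generated by divergence-free fields and compare them directly. Recall that the Kirillov--Kostant form at a point $m$ of a coadjoint orbit is
\[
\Omega^{KK}_m(\mathrm{ad}^*_u m,\mathrm{ad}^*_v m)=\langle m,[u,v]\rangle ,
\]
up to the sign convention for the bracket of vector fields. So the proof has three parts: identify the coadjoint action on $\Sigma$ with transport of $\Sigma$, evaluate $\langle \Sigma,[u,v]\rangle$ via the flux pairing~\eqref{flux}, and turn that flux into an integral over $\Sigma$ itself.

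\emph{Step 1: the orbit.} Because the pairing~\eqref{flux} is natural under volume-preserving diffeomorphisms,
\[
\langle \Sigma, \varphi^{-1}_*v\rangle=\langle \varphi(\Sigma),v\rangle .
\]
Hence the coadjoint orbit $\mathcal O_\Sigma$ consists of the images $\varphi(\Sigma)$, $\varphi\in\Diff_\sigma(S)$. The infinitesimal coadjoint action of $u\in\vect_\sigma(S)$ is the variation $U=u|_\Sigma$, up to the normal component, since tangential components do not move $\Sigma$. Conversely, any normal variation $U$ of $\Sigma$ extends to a divergence-free field $u$. This is the step I expect to need the most care. I would first extend $U$ arbitrarily, then correct its divergence by adding a field vanishing on $\Sigma$. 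That correction exists because $\Sigma$ has codimension $2$, so the divergence equation can be solved in the complement with compactly supported corrections away from $\Sigma$. Exactness of the extension follows from the assumptions that $\Sigma$ is null-homologous and $H^1(S)=0$. Thus $T_\Sigma\mathcal O_\Sigma$ is identified with the normal variations of $\Sigma$, which are the tangent vectors of $\mathcal C$.

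\emph{Step 2: well-definedness.} For the formula in Step 3 to descend to the orbit, its value must depend only on $U=u|_\Sigma$ and $V=v|_\Sigma$. This follows once Step 3 is done, because the final expression involves only these restrictions. It is also consistent with the general KK fact that $\langle m,[u,v]\rangle$ vanishes whenever $u$ is in the stabilizer of $m$.

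\emph{Step 3: the computation.} Let $D=\partial^{-1}\Sigma$. Since $i_{[u,v]}=[L_u,i_v]$, $L_u\sigma=0$ and $d\sigma=0$, Cartan's formula gives
\[
i_{[u,v]}\sigma=L_u i_v\sigma=d\,i_u i_v\sigma .
\]
Stokes' theorem then yields
\[
\langle\Sigma,[u,v]\rangle=\int_D d\,i_ui_v\sigma=\int_\Sigma i_ui_v\sigma=\int_\Sigma i_Ui_V\sigma=\Omega^{MW}_\Sigma(U,V).
\]
The remaining work is only to check sign and orientation conventions, namely the order of $U,V$ in $i_Ui_V$ and the sign convention for the bracket of vector fields.

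Together these steps show that $\Omega^{MW}$ and the Kirillov--Kostant form agree on every pair of tangent vectors to $\mathcal O_\Sigma=\mathcal C$. The main obstacle remains the existence of divergence-free extensions in Step 1, and with it the identification of the orbit with nearby membranes.
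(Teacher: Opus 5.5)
Your proposal is correct and follows the paper's argument: the paper likewise uses the identity $i_{[u,v]}\sigma=d(i_ui_v\sigma)$ for divergence-free fields followed by Stokes, and your integration over $\partial^{-1}\Sigma$ is the same computation as its integration against a primitive $\eta$ of the vorticity $\delta_\Sigma$ (take $\eta=\delta_{\partial^{-1}\Sigma}$, up to sign). Your Step 1 goes beyond the paper, which simply takes divergence-free extensions for granted; note that corrections supported away from $\Sigma$ can only fix the divergence of an extension that is already divergence-free near $\Sigma$, which you can arrange, e.g., in a tubular neighborhood where $\sigma=\sigma_\Sigma\wedge dx\wedge dy$, so an arbitrary initial extension is not enough.
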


The corresponding Kirillov--Kostant symplectic structure on orbits in $\overline{\vect}^*_\sigma(S)$
is given as follows. 
Regard the extension fields $u$ and $v$ in $S$ as variations of the vorticity 2-form $\delta_\Sigma\in \overline{\vect}^*_\sigma(S)$. 
The Kirillov--Kostant symplectic structure on a coadjoint orbit $\mathcal{O}_\Sigma$ associates to a pair of such variations, tangent to the coadjoint orbit of the vorticity $\delta_\Sigma$, the following quantity:
$$
 \Omega^{KK}_\Sigma(u,v) := \int_S (i_{[u,v]} \zeta)\, \sigma \,,
$$   
where the 1-form $\zeta :=d^{-1}\delta_\Sigma$ is any primitive of the vorticity 2-form $\delta_\Sigma$ supported on $\Sigma$, while $[u,v]$ is the commutator of the vector fields $u$ and $v$ on $S$. 

\begin{proof}[Proof of Proposition~\ref{prop:MWvsKK}]
The proof reduces to the following sequence of identities.
For an exact vorticity 2-form $\xi=d\eta$ one has
$$
 \Omega^{KK}_\xi(u,v) := \langle d^{-1}\xi, [u,v]\rangle=\langle \eta, [u,v]\rangle=
 \int_S \eta\wedge i_{[u,v]} \sigma = \int_S \xi\wedge i_u i_v\sigma \,,
$$
Here we used that for divergence-free vector fields $u$ and $v$ their commutator satisfies the identity 
$i_{[u,v]}\sigma = d(i_u  i_v \sigma)$, which implies the last equality. 
For a singular $\xi=\delta_\Sigma$ the final integral reduces to $ \int_\Sigma i_Ui_V\,\sigma$, which is the definition of $\Omega^{MW}_\Sigma(U, V)$.
We refer to \cite{ArnoldKh, HV2004, Kh2012} for more details.
\end{proof}

\begin{remark}\label{rem:MW-filaments}
One can also describe the Marsden--Weinstein symplectic on membranes
within the Donaldson framework applied to submersions $S \to M$. 
Namely, let $\mathcal M$ be  the space of proper submersions $(S,\sigma)\to (M, \omega)$ with $\operatorname{dim} S > \operatorname{dim}M=2$. Consider first 
the action on $\mathcal M$ of the symplectomorphism group $\Symp_\omega(M)$ from the left. 
It is  symplectic on the space of maps $\mathcal M$ and 
transitive on $M$. 
Take any point $q\in M$ and consider the symplectic form $\omega_q$ on the tangent space $T_qM$. 
(Here $q\in M$ can be regarded as a zero-dimensional weighted isotropic submanifold of weight 1, while one considers the preimage of a point orbit for the momentum map $\mathcal M\to \symp^*_\omega(M)$ for the $\Symp_\omega(M)$-action.)
By considering the space of pre-images $\Sigma=f^{-1}(q)\subset S$ for various submersions $f$ with a noncritical value $q$
we obtain the space of codimension 2 closed cooriented submanifolds in $S$. 
Finally, via quotients by the action of the group $\mathcal G = \mathrm{Diff}_\sigma(S)$ on those pre-images from the right we obtain the space of codimension 2 closed (co)oriented submanifolds $\Sigma$ in  $S$ considered modulo diffeomorphism action. 

One can see that this is the Marsden--Weinstein symplectic structure on the space of vortex membranes, as the latter can be regarded as the averaged  symplectic structure in ``normal directions" to $f^{-1}(q)\subset S$. This is the Hamiltonian reduction from the symplectic structure on $\mathcal M$
first by {\it restriction} to all pre-images of a point, and then by taking the {\it quotient} by the $\mathrm{Diff}_\sigma(S)$-action, see Figure~\ref{fig}. 
For $\operatorname{dim} S=3$ and $\operatorname{dim}M=2$ we obtain the space of non-parametrized 
knots in $S$ equipped with the Marsden--Weinstein symplectic structure. We discuss this relation in more detail in Appendix~\ref{sect:reduction}.
\end{remark}

\begin{figure}[h]
\centering
\begin{tikzpicture}[scale=1, >=Stealth, line cap=round, line join=round]

\draw[thick] (-2.7,.2) ellipse (2.5 and 2);
\node at (-5.2,1.8) {$(S,\sigma)$};
\node at (-1.4,.95) {$f^{-1}(q)$};

\begin{scope}[shift={(-3,0)}, scale=1.1]

\draw[thick]
(-.7,-.4)
.. controls (-3,-.2) and (-1,1.2)  .. (1.3,0.5)
.. controls (3, 0) and (1, -1.2) .. (-.2, -.2);

\draw[thick]
(-0.3,-.1)
.. controls (-1,0.6) and (-.8,-1) .. (0.3, -0.6);

\draw[thick]
(0.5,-.4)
.. controls (.6,-.2) and (.2,.2) .. (-0.5, -0.3);

\end{scope}

\draw[->, thick] (-3.4,0.75) -- (-3.5,1.6) node[above] {$u$};
\draw[->, thick] (-3.4,0.75) -- (-2.7,1.4) node[above] {$v$};
\node[circle, fill, inner sep=1pt, ] at (-3.4,0.75) {};

\draw[->, thick] (0.2,0) -- (1.5,0);
\node at (0.7,0.3) {$f$};

\draw[thick] (3.2,0.3) ellipse (1 and 2);
\node at (4.8,1.8) {$(M,\omega)$};

\draw[->, thick] (3,0) -- (2.9,1.0) node[above] {$f_*u$};
\draw[->, thick] (3,0) -- (3.6,0.7) node[above] {$f_*v$};
\node[circle, fill, inner sep=1.5pt, label=below right:$q$] at (3,0) {};

\end{tikzpicture}
\caption{The Marsden--Weinstein symplectic structure on the space of knots can be regarded as the averaged  symplectic structure in ``normal directions" to the fibers of $f:S\to M$.}\label{fig}
\end{figure}

\subsection{Vortex sheets (codimension 1 submanifolds)}

It turns out that a similar symplectic structure also exists on the space of vortex sheets $(P, \beta)$ in $S$. 
Here, a vortex sheet $(P, \beta)$ consists of a hypersurface $P\subset S$ equipped with a 1-form $\beta$, so that the current $\beta\wedge \delta_P$ is an exact 2-form. (Note that $\delta_P$ is a singular 1-form (current) with support on $P$.)

\begin{definition}[\cite{Kh2012}]\label{def:vs}
The {\it symplectic structure} on the space of vortex sheets $\mathcal{VS}$ is defined by the formula
$$
    \Omega^{VS}_{(P, \beta)}(V, W) := \int_P \beta\wedge (i_Vi_W\,\sigma)
$$
for a pair of divergence-free vector fields $V$ and $W$ in $\mathcal{VS}$ that are  regarded as a pair of variations of the vortex sheet $(P, \beta)$.
Here if the manifold $S$ is of dimension $n$, the hypersurface $P$ is of dimension $n-1$, over which one integrates the wedge product of the 1-form $\beta$ and
the $(n-2)$-form $i_Vi_W\,\sigma$ defined on $P$.
\end{definition}

\begin{remark} More precisely, the space $\mathcal{VS}$ associated with a vortex sheet $(P, \beta)$ contains all pairs 
$(\tilde P, \tilde \beta)$ diffeomorphic to it by means of volume-preserving diffeomorphisms of $S$, i.e. $\mathcal{VS}$  is a volume analogue of the space of isodrastic submanifolds, cf. \cite{HV2025}. 
\end{remark}

Consider arbitrary extensions of the fields $V$ and $W$ defined on $P\subset S$ to divergence-free
vector fields $v$ and $w$ on $(S, \sigma)$. A  vortex sheet $(P, \beta)$ can be regarded as a functional on the space of exact divergence-free vector fields in $(S, \sigma)$: given $v\in \vect_\sigma(S)$, the pairing is 
$$
\langle (P, \beta), v\rangle = \int_P i_v d^{-1}(\beta\wedge\delta_P)\,\sigma\,,
$$
where $d^{-1}(\beta\wedge\delta_P)$ is a primitive 1-form for the vorticity 2-form $\beta\wedge\delta_P$. The pairing is
well-defined, i.e., it does not depend on the choice of primitive.

\begin{proposition}[\cite{Kh2012}]
The Marsden--Weinstein symplectic structure on $\mathcal{VS}$ coincides with the Kirillov--Kostant symplectic structure 
on the coadjoint orbit $\mathcal O_{(P, \beta)}\in \vect^*_\sigma(S)$.
\end{proposition}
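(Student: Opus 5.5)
The plan mirrors the proof of Proposition~\ref{prop:MWvsKK}, with the singular vorticity $\delta_\Sigma$ replaced by the exact singular 2-form $\xi:=\beta\wedge\delta_P$. Let $\zeta=d^{-1}\xi$ be a primitive 1-form current, so that the vortex sheet, as an element of $\overline{\vect}^*_\sigma(S)$, is the class $[\zeta]$ modulo exact forms. Let $v,w\in\vect_\sigma(S)$ be divergence-free extensions of the variations $V,W$ of $(P,\beta)$. The Kirillov--Kostant form on the orbit is then
\[
\Omega^{KK}_{(P,\beta)}(v,w)=\langle \zeta,[v,w]\rangle=\int_S \zeta\wedge i_{[v,w]}\sigma .
\]

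First I will use the identity $i_{[v,w]}\sigma=d(i_vi_w\sigma)$, valid for divergence-free fields; it follows from Cartan's formulas and $L_v\sigma=L_w\sigma=0$. Integrating by parts on the closed manifold $S$ then moves $d$ onto $\zeta$:
\[
\int_S \zeta\wedge d(i_vi_w\sigma)=\pm\int_S d\zeta\wedge i_vi_w\sigma=\pm\int_S \beta\wedge\delta_P\wedge i_vi_w\sigma .
\]
The sign is fixed by the degree conventions, exactly as in the membrane case. This step also shows the pairing is independent of the choice of primitive $\zeta$ and of the exact representative.

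Next I will evaluate the current $\delta_P$. It localizes the integral to the hypersurface, so the right-hand side becomes $\int_P\beta\wedge i_Vi_W\sigma$. Only the restrictions $V=v|_P$ and $W=w|_P$ enter, which shows independence of the chosen extensions. The result is precisely $\Omega^{VS}_{(P,\beta)}(V,W)$ of Definition~\ref{def:vs}.

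I also need to check that the divergence-free variations $V,W$ of $(P,\beta)$ are exactly the infinitesimal coadjoint variations $-L_v\xi$ of the vorticity. This holds because $\mathcal{VS}$ is, by definition, the orbit of $(P,\beta)$ under $\Diff_\sigma(S)$, and $\Diff_\sigma(S)$ acts on vorticities by push-forward. Consequently the two forms are defined on the same tangent space.

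The main obstacle is making the integration by parts rigorous for currents. Since $\zeta$ is singular, the product $\zeta\wedge d(i_vi_w\sigma)$ must be read as the action of the current $\zeta$ on a smooth form, and Stokes' theorem must be understood as the definition of $d$ on currents. I will also need to handle orientation and sign conventions for $\delta_P$ so that $\delta_P\wedge\alpha$ integrates to $\int_P\alpha$; I would fix these conventions at the outset.
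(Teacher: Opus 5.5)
Your proposal is correct and follows the same route the paper relies on. The paper defers this statement to \cite{Kh2012}, but its proof of Proposition~\ref{prop:MWvsKK} already establishes $\Omega^{KK}_\xi(u,v)=\langle d^{-1}\xi,[u,v]\rangle=\int_S\xi\wedge i_ui_v\sigma$ for any exact vorticity $\xi$ using $i_{[u,v]}\sigma=d(i_ui_v\sigma)$; specializing this to $\xi=\beta\wedge\delta_P$ gives $\int_P\beta\wedge i_Vi_W\sigma$, exactly as in your argument.
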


While vorticity is always an exact 2-form (or 2-current), one can extend the Marsden--Weinstein symplectic structure to the case of nonexact $\beta\wedge \delta_P$, see \cite{GBV2024}.

\begin{remark}\label{rem:MW-sheets}
Similarly to the case of membranes, one can put the symplectic structure on vortex sheets within Donaldson's framework. 
Namely, take a weighted 1-dimensional submanifold $(L, \lambda)$ in the symplectic surface $(M, \omega)$, where $L\subset M$ is a closed curve, 
$\lambda$ is a 1-form on $L$, and $\operatorname{dim}M=2$. 
Consider now the space of pre-images $(f^{-1}(L), f^*\lambda)$ in $S$ for various submersions $f$. This time they 
can be regarded as the space of vortex sheets $(P, \beta)$: codimension 1 closed oriented submanifolds $P=f^{-1}(L)$ in $(S, \sigma)$ equipped with a closed 1-form $\beta=f^*\lambda$. (More precisely, for vortex sheets the pair is supposed to represent an exact form, e.g., provided if  the preimage $f^{-1}(L)\subset S$ is null-homologous.) The corresponding Hamiltonian reduction from the symplectic structure on weighted 1-dimensional submanifolds in $(M, \omega)$ induces an analog of the Marsden--Weinstein symplectic structure on vortex sheets in $S$, see \cite{Kh2012}.  We refer to  Appendix~\ref{sect:reduction} for a  more formal exposition. 
\end{remark}


\section{Symplectic structure on singular 1-form densities}\label{sec:triples}

In this section we introduce a new symplectic structure motivated by the singular vorticities in compressible fluids and 
modeled by augmented triples or singular 1-form densities, as  in \cite{HV2025}.
It can be viewed as an extension of the Marsden–Weinstein structure, from the incompressible to the compressible case.

To describe a similar problem for the group ${\rm Diff}(Q)$ of {\it all diffeomorphisms} of a manifold $Q$, consider its Lie algebra $\vect(Q)$ of all vector fields and its smooth dual space $\vect^*(Q)$. 
We are going to define an analog of the Marsden--Weinstein symplectic structure on singular elements of the corresponding full dual $\overline{\vect}^*(Q)$. 
In a sense, this structure occupies an intermediate position between the Weinstein symplectic structure on weighted isotropic submanifolds ${\mathcal Is}$ and the Marsden--Weinstein symplectic structure on membranes $\mathcal{C}$.
\medskip

First, note that the smooth dual $\vect^*(Q)$ of the  Lie algebra $\vect(Q)$  
can be identified with the space $\Omega^1(Q)\otimes \Omega^n(Q)$, where $Q$ is of dimension $n$ and the tensor product $\otimes$ is over $C^\infty(Q)$.

\begin{definition}[\cite{HV2025}]
    Let $(\Gamma, \alpha\otimes\rho)$ be a {\it  singular 1-form density} where $\Gamma\subset Q$ is a compact oriented
    $k$-dimensional submanifold of $Q$, 
    $\rho$ is a (volume) $k$-form on $\Gamma$, and $\alpha\in \Omega^1_\Gamma(Q)$ is a 1-form on $Q$ defined only at the points of $\Gamma$.
    It can be regarded as a singular linear functional on the Lie algebra $\vect(Q)$ as follows:
    \begin{equation}\label{incl}
    \langle (\Gamma, \alpha\otimes\rho), v\rangle:=\int_\Gamma (i_v\alpha)\rho,
    \qquad\text{for}\quad v\in \vect(Q).
    \end{equation}
    Thus, we think of the 1-form density as a current $\alpha\otimes\rho\wedge \delta_\Gamma \in \overline{\vect}^*(Q)$, where the tensor product $\otimes$ is over $C^\infty(\Gamma)$. 
\end{definition}

Diffeomorphisms of $Q$ act on the 1-form densities in a natural way: $\varphi\in \Diff(Q)$ acts by $\varphi^*(\alpha\otimes\rho)\wedge \delta_{\varphi^{-1}(\Gamma)}$.  
We denote by $\Aug$ the space of singular 1-form densities with $k$-dimensional support in $Q$ of a given diffeomorphism type.
One can also associate an {\it augmented triple} $(\Gamma, \rho, \alpha)$ to $\alpha\otimes\rho\wedge \delta_\Gamma$. Such a triple is not unique: triples $(\Gamma, \rho, \alpha)$ and $(\Gamma, \rho/f, f\alpha)$
for a nonvanishing $f\in  C^\infty(\Gamma)$ correspond to the same 1-form density. Once a decomposition of $\alpha\otimes\rho\wedge \delta_\Gamma$ into the triple $(\Gamma, \rho, \alpha)$ is fixed, a diffeomorphism $\varphi\in \Diff(Q)$ acts naturally by $(\varphi^{-1}(\Gamma), \varphi^*\rho, \varphi^*\alpha)$.
\medskip

Below we  define a natural symplectic structure on the space $\Aug$  of singular 1-form densities, as well as 
the corresponding presymplectic structure on the space of augmented triples. 

\medskip

\begin{theorem} \label{thm:triple_ss}
    The generalized Marsden--Weinstein symplectic structure on
the space $\Aug$ the space of singular 1-form densities $\alpha\otimes\rho\wedge \delta_\Gamma$ is defined by formula 
    \eqref{eq:triple} below and coincides with   the Kirillov--Kostant symplectic structure on the coadjoint orbit $\mathcal O_{(\Gamma, \alpha\otimes\rho)}\subset \overline{\vect}^*(Q)$. The pullback of the corresponding symplectic form defines a presymplectic structure on the space $\mathcal Tri$ of augmented triples $(\Gamma, \rho, \alpha)$.
\end{theorem}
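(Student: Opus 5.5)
Since \eqref{eq:triple} appears only after the statement, the plan is to derive the formula from the Kirillov--Kostant structure and then check that it depends only on the variations of the current, exactly as in the proof of Proposition~\ref{prop:MWvsKK}.

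\textbf{Step 1: the KK form at $m=\alpha\otimes\rho\wedge\delta_\Gamma$.} Tangent vectors to the orbit $\mathcal O_{(\Gamma,\alpha\otimes\rho)}$ are $\operatorname{ad}^*_u m$ with $u\in\vect(Q)$. The Kirillov--Kostant form is
$$\Omega^{KK}_m(\operatorname{ad}^*_u m,\operatorname{ad}^*_v m)=\langle m,[u,v]\rangle=\int_\Gamma (i_{[u,v]}\alpha)\,\rho .$$
Up to the sign convention for $[u,v]$, this is the right-hand side of \eqref{incl} evaluated on $[u,v]$. Since $\alpha$ is defined only along $\Gamma$, we extend it to a 1-form $\tilde\alpha$ on a neighborhood; the result does not depend on the choice.

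\textbf{Step 2: rewrite in terms of restrictions.} Use the identity
$$i_{[u,v]}\tilde\alpha=L_u(i_v\tilde\alpha)-i_v L_u\tilde\alpha=u(\tilde\alpha(v))-v(\tilde\alpha(u))-d\tilde\alpha(u,v).$$
Then integrate against $\rho$ over $\Gamma$. The tangential components of $u$ and $v$ can be integrated by parts on $\Gamma$, which turns derivatives of $\tilde\alpha(v)$ along $\Gamma$ into terms involving $\operatorname{div}_\rho$ and $L_{u^\parallel}\rho$. The normal derivatives are the first-order data describing how the triple moves: the variation of $\Gamma$ is $u|_\Gamma \bmod T\Gamma$, the variation of $\rho$ is $L_{u^\parallel}\rho$, and the variation of $\alpha$ is $(L_u\tilde\alpha)|_\Gamma$, the Lie derivative of $\alpha$ along $u$ taken at the points of $\Gamma$.

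The target is a formula of Marsden--Weinstein type:
$$\Omega_m(\delta_u m,\delta_v m)=\int_\Gamma\big(\delta_u\alpha(v)-\delta_v\alpha(u)-d\tilde\alpha(u,v)\big)\rho+\text{boundary-free terms}.$$
Equivalently, I would write it as the pairing $\langle \delta_u m, v\rangle$, in the form $\int_\Gamma\big(i_v(\delta_u\alpha)\,\rho+(i_v\alpha)\,\delta_u\rho\big)$ together with a normal-displacement term. This expression depends on $u$ only through the tangent vector $\delta_u m$. I would adopt it as \eqref{eq:triple}.

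\textbf{Step 3: well-definedness, nondegeneracy and closedness.} The standard KK argument gives all three at once:
\begin{itemize}
\item $\langle\operatorname{ad}^*_u m,v\rangle=\langle m,[u,v]\rangle$, so the expression depends only on $\operatorname{ad}^*_u m$ and $\operatorname{ad}^*_v m$.
\item Antisymmetry then shows it also depends only on the first variation.
\item It is independent of the extensions $\tilde\alpha$, $u$, $v$ off $\Gamma$ up to the needed jet order.
\item Nondegeneracy holds on the orbit, because the kernel consists exactly of the $\operatorname{ad}^*_u m=0$ directions.
\item Closedness is inherited from KK.
\end{itemize}
For the identification of $\Aug$ near $m$ with the orbit, I would cite \cite{HV2025}: nearby singular 1-form densities of the same type are reached by diffeomorphisms.

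\textbf{Step 4: the presymplectic statement.} The map $\mathcal Tri\to\Aug$, $(\Gamma,\rho,\alpha)\mapsto\alpha\otimes\rho\wedge\delta_\Gamma$, is a $\Diff(Q)$-equivariant surjective submersion. Its fibers are the orbits of the $C^\infty(\Gamma)^\times$ action $(\rho,\alpha)\mapsto(\rho/f,f\alpha)$. The pullback of a closed form is closed, and its kernel contains the tangent spaces to the fibers, namely the variations $(\delta\rho,\delta\alpha)=(-g\rho,g\alpha)$. It is therefore presymplectic, with kernel exactly the fiber directions because $\Omega$ is nondegenerate downstairs.

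\textbf{Main obstacle.} I expect Step 2 to be the hard part. One has to show that the KK expression is genuinely expressible through the variations of the triple restricted to $\Gamma$, and that the normal derivatives of $u$ and $v$ enter only through $\delta\alpha$. I would first handle this in adapted coordinates, writing $\Gamma$ as the zero section of its normal bundle, and split $u$ and $v$ into tangential and normal parts there.
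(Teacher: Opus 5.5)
\textbf{The gap.} The gap is in your Step~2, which you flag as the main obstacle and which carries the actual content of the statement. You never reach a concrete formula: the ``$+$ boundary-free terms'' and the ``normal-displacement term'' are left unspecified. So when you ``adopt it as \eqref{eq:triple}'', you are in effect defining the structure as $\langle \delta_u m, v\rangle$, i.e.\ as the Kirillov--Kostant form itself. The claimed coincidence then becomes a tautology. The theorem refers to a specific formula, $\int_{\tilde\Gamma}\{H_{\tilde u},H_{\tilde v}\}\tilde\rho$, and its equality with $\int_\Gamma\alpha([u,v])\rho$ is what has to be proved.

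Moreover, the route you sketch does not lead to an intrinsic expression. With your identification $\delta_u\alpha=(L_u\tilde\alpha)|_\Gamma$, Cartan's formula gives, pointwise on $\Gamma$,
\[
\delta_u\alpha(v)-\delta_v\alpha(u)-d\tilde\alpha(u,v)=-\tilde\alpha([u,v]).
\]
So your ``target'' is just the KK integrand again, up to sign, and no integration by parts is involved. At the same time, each of its three terms depends on the normal derivatives of the arbitrary extension $\tilde\alpha$ as soon as $u$ or $v$ is transverse to $\Gamma$. Splitting $u,v$ into tangential and normal parts and integrating by parts along $\Gamma$ does not cure this. The real problem is that a ``variation of $\alpha$'' for a covector field living only along a moving submanifold has no canonical meaning without extra choices, such as an extension or a connection.

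\textbf{The missing idea.} The missing idea, which is how the paper proves the statement, is to pass to $T^*Q$. Regard $\alpha$ as a section of $T^*Q|_\Gamma$ and lift $\Gamma$ to $\tilde\Gamma=\alpha(\Gamma)\subset T^*Q$ with weight $\tilde\rho=\pi^*\rho$. Lift $u\in\vect(Q)$ to the Hamiltonian field $\tilde u$ with $H_{\tilde u}(q,p)=\langle p,u(q)\rangle$. Two facts then finish the argument:
\begin{itemize}
\item $H_{\tilde w}|_{\tilde\Gamma}=\alpha(w)$, by construction of the lift;
\item $u\mapsto H_{\tilde u}$ is a Lie algebra homomorphism, $H_{\widetilde{[u,v]}}=\{H_{\tilde u},H_{\tilde v}\}$.
\end{itemize}
Hence
\[
\int_\Gamma\alpha([u,v])\,\rho=\int_{\tilde\Gamma}H_{\widetilde{[u,v]}}\,\tilde\rho=\int_{\tilde\Gamma}\{H_{\tilde u},H_{\tilde v}\}\,\tilde\rho=\int_{\tilde\Gamma}\omega_{T^*Q}(\tilde u,\tilde v)\,\tilde\rho ,
\]
which is Weinstein's formula \eqref{wein} applied to the (not necessarily isotropic) lifted weighted submanifold.

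This resolves your obstacle. The lifted vector $\tilde u|_{\tilde\Gamma}$, whose fiber component in coordinates is $-\alpha_q\circ Du(q)$, is the canonical variation of the section $\alpha$. It carries the full first jet of $u$ along $\Gamma$ without any extension of $\alpha$ or choice of connection. The integrand is pointwise on $\tilde\Gamma$, which is what justifies calling the structure of Marsden--Weinstein type. Independence of the splitting $(\rho/f,f\alpha)$ is immediate, because the left-hand side sees only $\alpha\otimes\rho$.

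\textbf{Steps 3 and 4.} These are fine and match the paper's brief treatment: nondegeneracy and closedness are inherited from the orbit, and the pullback to triples is degenerate along the directions $(-g\rho,g\alpha)$. The paper also phrases this as $\Omega^{GMW}=i^*\Omega^W=(\psi|_{\Tri})^*\Omega^{KK}$ for the projection $\psi\colon\overline{\ham}(T^*Q)^*\to\overline{\vect}(Q)^*$.
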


\begin{remark}
The space of {\it nowhere vanishing} singular 1-form densities is called {\it augmented non-linear Grassmannian}. 
Their coadjoint orbits and other properties were studied in detail from the point of view of symplectic reduction 
and EPDiff dual pairs in  \cite{HV2025}.
Below we extend this consideration to the case of arbitrary  1-form densities and give an explicit formula
for the corresponding symplectic structure. We also single out the case where the 1-form $\alpha$ can be chosen to have closed restriction $\alpha|_{\Gamma}$, i.e. $d(\alpha|_{\Gamma})=0$: the symplectic structure on $\Aug$  turns out to be more natural in that case.
\end{remark}

\begin{proof}
First we recall that any diffeomorphism of a manifold $Q$ gives rise to a symplectomorphism of $T^*Q$
with the standard symplectic structure. In particular, the vector field $v(q)\partial/\partial q$ on $Q$ lifts to
the Hamiltonian field $v(q)\partial/\partial q-p\cdot Dv(q)\partial/\partial p$ with the Hamiltonian
$H_{\tilde v}=p\cdot v(q)$ on $T^*Q$ (the evaluation map at $v$).

Next we decompose  elements $(\Gamma, \alpha\otimes\rho)$ of $\Aug$ into triples $(\Gamma, \rho, \alpha)$, 
with $\alpha\in \Omega^1_\Gamma(Q)$ and $\rho\in\mathrm{Vol}(\Gamma)$, and fix this decomposition for now.
Each triple  determines a weighted submanifold in $T^*Q$. 
Namely, use the 1-form $\alpha$ to lift $\Gamma\subset Q$ to $\widetilde \Gamma\subset T^*Q$, 
since $\alpha $ defines a section of the bundle $\pi: T^*Q\to Q$.  Moreover, for $\rho\in\Omega^k(\Gamma)$ we define $\tilde\rho:=\pi^*\rho\in \Omega^k(\widetilde\Gamma)$. Thus, $(\widetilde \Gamma, \tilde \rho)$ is 
a weighted submanifold in $T^*Q$.


Note that one obtains an {\it isotropic weighted submanifold} if and only if $d(\alpha|_\Gamma)=0$.

\begin{lemma}
The restriction $\alpha|_\Gamma$ is closed on $\Gamma$ if and only if the lift $\widetilde \Gamma\subset T^*Q$ is isotropic.
\end{lemma}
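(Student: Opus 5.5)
The plan is to identify $\widetilde\Gamma$ with the image of the section $s_\alpha\colon\Gamma\to T^*Q$, $s_\alpha(q)=\alpha(q)$, restricted to $\Gamma$, and then compute the pullback of the canonical symplectic form $\omega_{can}=-d\theta$ of $T^*Q$ along this section. The Liouville form $\theta$ is characterized by the tautological property: for any 1-form $\beta$ on $Q$, viewed as a section $s_\beta\colon Q\to T^*Q$, one has $s_\beta^*\theta=\beta$. I will invoke this property and apply it only along $\Gamma$.

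First I will check that the tautological property localizes. For $q\in\Gamma$ and $X\in T_q\Gamma$, the definition of $\theta$ gives
\[
(s_\alpha^*\theta)(X)=\theta_{\alpha(q)}\bigl(ds_\alpha(X)\bigr)=\alpha(q)\bigl(d\pi\, ds_\alpha(X)\bigr)=\alpha(q)(X),
\]
since $\pi\circ s_\alpha=\mathrm{id}_\Gamma$. This only uses the values of $\alpha$ at points of $\Gamma$, so it is legitimate for $\alpha\in\Omega^1_\Gamma(Q)$. Hence $s_\alpha^*\theta=\iota_\Gamma^*\alpha=\alpha|_\Gamma$.

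Next, naturality of $d$ gives $s_\alpha^*\omega_{can}=-d\bigl(s_\alpha^*\theta\bigr)=-d(\alpha|_\Gamma)$. Since $s_\alpha\colon\Gamma\to\widetilde\Gamma$ is a diffeomorphism (it is an embedding, with inverse $\pi|_{\widetilde\Gamma}$), the restriction $\omega_{can}|_{\widetilde\Gamma}$ vanishes exactly when $d(\alpha|_\Gamma)=0$. This proves both directions at once.

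The only step needing care is the first one: making sure that the pullback $s_\alpha^*\theta$ involves only the pairing of $\alpha$ with vectors tangent to $\Gamma$, so the normal components of $\alpha$ play no role. Once that is verified, the rest is formal. As a consistency check, when $\dim\Gamma=1$ the lift is automatically isotropic, as expected.
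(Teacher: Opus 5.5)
Your proposal is correct and follows the same route as the paper: you pull back the Liouville form along the section defined by $\alpha$ to get $\alpha|_\Gamma$, then use that pullback commutes with $d$, so $\omega|_{\widetilde\Gamma}$ corresponds to $d(\alpha|_\Gamma)$. You simply spell out the tautological-property computation along $\Gamma$ that the paper leaves implicit, and your sign convention $\omega=-d\theta$ (versus the paper's $d\theta$) is immaterial.
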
 

\begin{proof}
The 1-form $\alpha$ defining the lift of $\Gamma$ to $T^*Q$ is  the restriction of the Liouville 1-form $\theta$ on $ T^*Q$ to $\widetilde \Gamma\subset T^*Q$. The restriction commutes with taking the differential. Then $\alpha|_\Gamma$ is closed on $\Gamma$, 
  if and only if  $\omega|_{\widetilde \Gamma} =d\theta |_{\widetilde \Gamma} = d\alpha|_\Gamma=0$, i.e. $\widetilde \Gamma$ is isotropic.
\end{proof}

Furthermore, the weighted submanifold  $(\widetilde \Gamma, \tilde \rho)$ can be regarded as a linear functional on $\ham(T^*Q)$ and hence an element of $\overline{\ham}^*(T^*Q)$, where the pairing with a Hamiltonian field $\tilde v$ with Hamiltonian $H_{\tilde v}$ on $T^*Q$ is
$$
\langle (\widetilde \Gamma, \tilde \rho), \tilde v\rangle:= \int_{\tilde \Gamma} H_{\tilde v}\, \tilde \rho\,.
$$
In this way we get a $\Diff(Q)$-equivariant injection
\[
\Tri\hookrightarrow \overline\ham(T^*Q)^*,
\]

\begin{lemma}
The linear functional on $\ham(T^*Q)$   corresponding to  $(\widetilde \Gamma, \tilde \rho)$
is well-defined on Hamiltonian fields on $T^*Q$ corresponding to vector fields on~$Q$, i.e., it does not depend on the 
splitting of $\alpha\otimes \rho$ into the pair of 1-form $\alpha$ and volume form $\rho$ on $\Gamma$.
\end{lemma}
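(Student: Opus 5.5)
We need to show: for a Hamiltonian field $\tilde v$ on $T^*Q$ that is the cotangent lift of $v\in\vect(Q)$, the pairing $\int_{\widetilde\Gamma} H_{\tilde v}\,\tilde\rho$ depends only on the 1-form density $\alpha\otimes\rho$. It does not depend on the particular triple $(\Gamma,\rho,\alpha)$ representing it.

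The plan is to compute the pairing directly and reduce it to formula \eqref{incl}. The lift $\widetilde\Gamma$ is the image of the section $s_\alpha\colon\Gamma\to T^*Q$, $q\mapsto\alpha(q)$. Since $\pi\circ s_\alpha=\mathrm{id}_\Gamma$, the map $s_\alpha$ is a diffeomorphism $\Gamma\to\widetilde\Gamma$ with inverse $\pi|_{\widetilde\Gamma}$. It is orientation-preserving once $\widetilde\Gamma$ carries the orientation transported from $\Gamma$. Hence
\[
\int_{\widetilde\Gamma} H_{\tilde v}\,\tilde\rho=\int_\Gamma s_\alpha^*\bigl(H_{\tilde v}\,\pi^*\rho\bigr)=\int_\Gamma (H_{\tilde v}\circ s_\alpha)\,\rho .
\]

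Next we evaluate the Hamiltonian on the section. Recall $H_{\tilde v}(q,p)=p\cdot v(q)$, the evaluation map. Then $H_{\tilde v}(s_\alpha(q))=\alpha(q)(v(q))=(i_v\alpha)(q)$. The pairing therefore equals $\int_\Gamma (i_v\alpha)\rho=\langle(\Gamma,\alpha\otimes\rho),v\rangle$ from \eqref{incl}.

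Finally we check invariance under rescaling. Replace the triple by $(\Gamma,\rho/f,f\alpha)$ with $f$ nonvanishing on $\Gamma$. The integrand becomes $(i_v(f\alpha))\,\rho/f=(i_v\alpha)\rho$ pointwise, so the value is unchanged. One point deserves care. If $f<0$ somewhere, then $\rho/f$ changes sign relative to $\rho$, but we keep the fixed orientation of $\Gamma$ and simply integrate the form $\rho/f$; the pointwise identity of integrands still holds. Alternatively, the tensor product over $C^\infty(\Gamma)$ makes $\alpha\otimes\rho$ itself a well-defined object, and $(i_v\alpha)\rho$ is the contraction of $v$ with it. That contraction depends only on $\alpha\otimes\rho$, as it is $C^\infty(\Gamma)$-bilinear.

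Note that the lifted submanifolds $\widetilde\Gamma$ for $\alpha$ and $f\alpha$ are different in $T^*Q$, so as elements of $\overline{\ham}^*(T^*Q)$ they differ in general. They agree only on the subalgebra of Hamiltonians fiberwise linear in $p$, namely the lifts of $\vect(Q)$. This is exactly the claim.

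The only mildly delicate step is this orientation and sign bookkeeping in the pullback; the rest is a direct computation.
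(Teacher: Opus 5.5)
Your argument is correct and is essentially the paper's: both identify $H_{\tilde v}|_{\widetilde\Gamma}$ with $i_v\alpha$ (the paper writes this as $\tilde\alpha(\tilde v)|_{\widetilde\Gamma}=H_{\tilde v}|_{\widetilde\Gamma}$, both sides being $p\cdot v(q)$), reduce the pairing to $\int_\Gamma (i_v\alpha)\,\rho$, and note that this integrand is unchanged under $(\rho,\alpha)\mapsto(\rho/f,f\alpha)$. Your closing remark slightly overstates things: the two lifted functionals agree on the whole kernel of their difference, not \emph{only} on the fiberwise-linear Hamiltonians; the lemma needs only the latter agreement, so the proof is unaffected.
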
 

\begin{proof}
The tensor product $\alpha\otimes \rho$ is over $C^\infty(\Gamma)$, so consider a different splitting
$\alpha'\otimes \rho'$ where $\alpha'=f\alpha$ and $ \rho'= \rho/f$ for $f\in C^\infty(\Gamma)$.
For a field $v$ on $Q$, lifted to a Hamiltonian field $\tilde v$ on $T^*Q$, 
 Then
$$
\int_{\tilde \Gamma} H_{\tilde v}\, \tilde \rho=
\int_{\tilde \Gamma}\tilde  \alpha(\tilde v)\, \tilde \rho=
\int_{ \Gamma}   \alpha(  v)\,  \rho=
\int_{ \Gamma}   f\alpha(  v)\,  (\rho/f)=
\int_{ \Gamma}   \alpha'(  v)\,  \rho'\,,
$$
where we used that by definition of the lift, $\tilde \alpha(\tilde v)|_{\tilde \Gamma}=H_{\tilde v}|_{\tilde \Gamma}$, as both sides at a point $(q,p)$ are equal to $p\cdot v(q)$. One can see that the pairing  is well-defined, since it involves only the product $\alpha\otimes \rho$.
\end{proof}

Now we can define the Kirillov--Kostant symplectic structure on the spaces of diffeomorphic 1-form densities $\Aug$ or
equivalence classes of triples $\mathcal Tri$. Recall that the non-degenerate pairing \eqref{incl} defines a $\Diff(Q)$-equivariant injection 
\[
\Aug\hookrightarrow \overline{\vect}^*(Q).
\]
The $\Diff(Q)$ coadjoint orbits in $\Aug$
are endowed with orbit symplectic forms
\begin{equation}\label{omkk}
 \Omega^{KK}_{(\Gamma, \alpha\otimes\rho)}(u,v)= \langle (\Gamma, \alpha\otimes\rho), [u,v] \rangle 
  = \int_\Gamma  \alpha([u,v])\,\rho,
\end{equation}
for vector fields $u,v \in \vect(Q)$, see e.g. \cite{HV2025}.

Now represent a 1-form density $(\Gamma, \alpha\otimes\rho)$  in $Q$ of any dimension $k$, $0\le k\le n=\dim Q$ by  an augmented triple $(\Gamma, \rho, \alpha)$ and consider two vector fields $u,v \in \vect(Q)$. 
They can be regarded as infinitesimal variations of the  triple.
Lift the augmented triple $(\Gamma, \rho, \alpha)$ in $Q$ to the weighted submanifold $(\tilde\Gamma, \tilde\rho)$ in $T^*Q$ of the same dimension $k$. 
The vector fields $u$ and $v$ are lifted to Hamiltonian fields $\tilde u$ and $\tilde v$ in $T^*Q$ with Hamiltonian functions $H_{\tilde u}(q,p) = \langle p, u(q)\rangle$ and $H_{\tilde v} = \langle p, v(q)\rangle$.
Thus, two such variations are given by Hamiltonian fields in $T^*Q$, whose Hamiltonians, linear in momenta $p$, are $H_{\tilde u}$ and $H_{\tilde v}$. 
Their Poisson bracket  $\{H_{\tilde u}, H_{\tilde v}\}$ restricted to and integrated over $\widetilde \Gamma$ against the density $\tilde \rho$ is the following closed 2-form on those triples:
 $$
 \Omega_{(\Gamma, \rho, \alpha)}(u,v)
=\int_{\tilde \Gamma} \{H_{\tilde u}, H_{\tilde v}\}\tilde \rho
= \int_{\tilde \Gamma} \omega_{T^*Q}({\tilde u}, {\tilde v})\tilde \rho\,.   
$$ 
Here, $\omega_{T^*Q}$ is the canonical symplectic structure on $T^*Q$. 

On the other hand, integration of the  Poisson bracket  $\{H_{\tilde u}, H_{\tilde v}\}$ restricted to $(\widetilde \Gamma, \tilde \rho)$ gives exactly  the Kirillov--Kostant symplectic structure on the coadjoint orbits of weighted submanifolds in $T^*Q$. 
Indeed, by definition of the Kirillov--Kostant bracket on $\mathcal O_{(\Gamma, \alpha \otimes\rho)}\subset \overline{\vect}^*(Q)$
is 
$$
 \Omega^{KK}_{(\Gamma, \alpha \otimes\rho)}(u,v) \coloneqq  \int_\Gamma  \alpha([u,v])\,\rho 
    = \int_{\tilde \Gamma} \tilde  \alpha(\widetilde{[u,v]})\,\tilde\rho
 =\int_{\tilde \Gamma} \{H_{\tilde u}, H_{\tilde v}\}\tilde \rho\,,
$$
where the last two identities hold due to the Lie algebra homomorphism  $\sim \colon \vect(Q)\to \ham(T^*Q)$, with 
$ H_{\widetilde{[u,v]}}=\{H_{\tilde u}, H_{\tilde v}\}$ and definition of the lift to $T^*Q$.
In particular the latter shows that the independence of the symplectic structure of the decomposition of $(\Gamma, \alpha \otimes\rho)$
into triples $(\Gamma, \rho, \alpha)$, so it is a well-defined symplectic structure on the equivalence classes
$(\Gamma, \rho, \alpha)\simeq (\Gamma,  \rho/f, f\alpha)$. This is natural to call the {\it generalized Marsden--Weinstein symplectic structure} on the singular 1-form densities (or equivalence classes of those triples):
\begin{equation}\label{eq:triple}
 \Omega^{GMW}_{(\Gamma, \alpha \otimes\rho)}(u,v)
=\int_{\tilde \Gamma} \{H_{\tilde u}, H_{\tilde v}\}\tilde \rho\,
\end{equation}
for Hamiltonian lifts  $\tilde u$ and $\tilde v$  to $T^*Q$ of the fields $u$ and $v$ in $Q$.

Note that the pullback of this structure to the space of triples themselves is by definition presymplectic: it is degenerate along the equivalence classes. 
This completes the proof of Theorem~\ref{thm:triple_ss}. 
\end{proof}

Note that, in general, the bracket $\{H_{\tilde u}, H_{\tilde v}\}$  depends on both the fields $u$ and $v$ and   on their first jets, since so do their lifts $\tilde u$ and $\tilde v$ to $T^*Q$. This explains why the generalized Marsden--Weinstein bracket is more complicated than the classical ``pointwise" Marsden--Weinstein structure in the volume-preserving setting.

\begin{remark}
Consider in more details the space of all decompositions of a given 1-form density $(\Gamma, \alpha \otimes\rho)$
into triples $(\Gamma, \rho/f, f\alpha)$. As we mentioned, the corresponding pullback is a presymplectic structure on {\it all the triples} projecting 
to ${\mathcal FD}$. The formal argument  is based on the following

\begin{lemma}
Let $H \subseteq G$ be a Lie subgroup, $\psi: \mathfrak{g}^* \to \mathfrak{h}^*$ the corresponding projection, $\mu\in \mathfrak{g}^*$,
and $\psi(\mu)=\nu$. 
We consider $\psi|_{H\cdot\mu}:H\cdot\mu\to\O_\nu$ and we denote by $i$ the inclusion $H \cdot \mu\subseteq \O_\mu$. 
Then, by pullback of the Kirillov--Kostant symplectic forms  $\omega_\mu$ and $\omega_\nu$, one gets a presymplectic form on $H\cdot\mu$:
\begin{equation}\label{pres}
i^*\omega_\mu=(\psi|_{H\cdot\mu})^* \omega_\nu.
\end{equation}
\end{lemma}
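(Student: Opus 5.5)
The identity \eqref{pres} is an equality of $2$-forms on $H\cdot\mu$, so I will check it on pairs of tangent vectors. Every tangent vector to $H\cdot\mu$ at a point $\mu'=h\cdot\mu$ is $\operatorname{ad}^*_\xi\mu'$ for some $\xi\in\mathfrak h$, since the tangent space is spanned by infinitesimal $H$-actions. Because both sides are invariant under the relevant group actions, it suffices to work at the point $\mu$ itself and then transport by $h$. In detail, $i$ is $H$-equivariant and $\omega_\mu$ is $G$-invariant, hence $H$-invariant. Also $\psi$ is $H$-equivariant, being the dual of the inclusion $\mathfrak h\hookrightarrow\mathfrak g$, which intertwines the adjoint actions of $H$. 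Consequently $\psi(H\cdot\mu)=H\cdot\nu=\O_\nu$, so the map $\psi|_{H\cdot\mu}$ does land in $\O_\nu$, and $\omega_\nu$ is $H$-invariant.

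The key step is a direct computation with the Kirillov--Kostant formula. Take $\xi,\eta\in\mathfrak h$ and tangent vectors $\operatorname{ad}^*_\xi\mu,\operatorname{ad}^*_\eta\mu$ at $\mu$. On the left,
$$(i^*\omega_\mu)(\operatorname{ad}^*_\xi\mu,\operatorname{ad}^*_\eta\mu)=\langle\mu,[\xi,\eta]_{\mathfrak g}\rangle,$$
because $i$ just includes these vectors into $T_\mu\O_\mu$. On the right, the differential of $\psi$ (a linear map) sends $\operatorname{ad}^{*,\mathfrak g}_\xi\mu$ to $\psi(\operatorname{ad}^{*,\mathfrak g}_\xi\mu)$. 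This equals $\operatorname{ad}^{*,\mathfrak h}_\xi\nu$, since for $\zeta\in\mathfrak h$
$$\langle\psi(\operatorname{ad}^*_\xi\mu),\zeta\rangle=\langle\mu,[\xi,\zeta]\rangle=\langle\nu,[\xi,\zeta]\rangle,$$
using $[\xi,\zeta]\in\mathfrak h$. Hence
$$(\psi|_{H\cdot\mu})^*\omega_\nu(\operatorname{ad}^*_\xi\mu,\operatorname{ad}^*_\eta\mu)=\langle\nu,[\xi,\eta]\rangle=\langle\mu,[\xi,\eta]\rangle,$$
and the two sides agree. The sign conventions are the same on both sides, so any convention-dependent sign cancels.

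The step needing care is well-definedness. The KK form $\omega_\mu(\operatorname{ad}^*_\xi\mu,\operatorname{ad}^*_\eta\mu)=\langle\mu,[\xi,\eta]\rangle$ does not depend on the choice of representatives $\xi,\eta$; this is standard, since replacing $\xi$ by $\xi+\zeta$ with $\operatorname{ad}^*_\zeta\mu=0$ leaves the value unchanged. Here, however, representatives come only from $\mathfrak h$, and the $\mathfrak h$-stabilizer of $\nu$ can be larger than that of $\mu$. This is harmless because we only evaluate $\omega_\nu$, itself well defined, on images of vectors, and the identity above holds for every choice.

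It remains to say why the form is presymplectic. It is closed because it is the pullback of a closed form. Its kernel at $\mu$ is $\ker d\psi\cap T_\mu(H\cdot\mu)$, the directions $\operatorname{ad}^*_\xi\mu$ with $\operatorname{ad}^*_\xi\mu|_{\mathfrak h}=0$, as $\omega_\nu$ is nondegenerate. In the application one takes $G=\Diff(T^*Q)$-type Hamiltonian groups and $H=\Diff(Q)$, and this kernel consists exactly of the rescalings $(\Gamma,\rho/f,f\alpha)$. In infinite dimensions I would keep everything formal, as the paper does.
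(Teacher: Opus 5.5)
Your proof is correct, and it reaches the identity by a more hands-on route than the paper. The paper argues in one line that $\psi$ is the equivariant momentum map for the $H$-action on $\mathfrak g^*$, so the identity follows from general momentum-map and Poisson-map theory. You instead verify it directly on infinitesimal generators. Your two identities, $d\psi(\operatorname{ad}^*_\xi\mu)=\operatorname{ad}^{*,\mathfrak h}_\xi\nu$ and $\langle\mu,[\xi,\eta]\rangle=\langle\nu,[\xi,\eta]\rangle$ for $\xi,\eta\in\mathfrak h$, are precisely what the equivariance and momentum-map properties of $\psi|_{\O_\mu}$ amount to in this setting.

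The paper's version is shorter and fits the dual-pair framework used elsewhere in the text. However, it leaves the actual identity implicit, and it speaks of the preimage of $\O_\nu$, which is in general larger than $H\cdot\mu$. Your version uses only that $\mathfrak h$ is a subalgebra with $\nu=\mu|_{\mathfrak h}$, and it reduces to the base point via $H$-invariance. It also identifies the kernel as $\ker d\psi\cap T_\mu(H\cdot\mu)$, which is exactly what the paper's later remark about the rescalings $(\Gamma,\rho/f,f\alpha)$ relies on.
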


\begin{proof}
It immediately follows from the fact that  the projection $\psi: \mathfrak{g}^* \to \mathfrak{h}^*$ is the momentum map for the $H$-action on $\mathfrak{g}^* $. The momentum map is Poisson and hence the preimage of a symplectic coadjoint orbit $\O_\nu\subset \mathfrak{h}^*$ is presymplectic in $\mathfrak{g}^* $.
\end{proof}

Apply this lemma to $\vect(Q)\hookrightarrow\ham(T^*Q)$ with dual map
\[
\psi: \overline\ham(T^*Q)^*\to\overline\vect(Q)^*.
\]
We obtain $\psi(\widetilde \Gamma, \tilde \rho)=(\Gamma,\alpha\otimes\rho)$ for every decomposition of the tensor product.
Its restriction to the  space $\Tri$, which  is the $\Diff(Q)$-orbit of $(\Gamma,\rho,\alpha)$, is simply the application of the tensor product.
The Kirillov--Kostant symplectic forms in \eqref{pres} are $\Omega^W$ from \eqref{wein} on the coadjoint orbit $\O_{(\widetilde\Gamma,\tilde\rho)}$, 
and $\Omega^{KK}$ from \eqref{omkk} on the coadjoint orbit $\O_{(\Gamma,\alpha\otimes\rho)}$.
Thus, the space $\Tri$ inherits the presymplectic form
\[
\Omega^{GMW}:=i^*\Omega^W=(\psi|_{\Tri})^*\Omega^{KK},
\]
where $i$ denotes the inclusion of $\Tri$ in the coadjoint orbit $\O_{\widetilde \Gamma,\tilde\rho}$.
\end{remark}

\begin{remark}
As we discussed above, the pullback of the 2-form $\Omega^{GMW}$ on triples may have a kernel. 
It is given by the  quotient of the Lie algebra of vector fields  preserving $\alpha\otimes\rho$ and the smaller subalgebra of fields preserving separately $\alpha$ and $\rho$, 
i.e. this kernel is given by vector fields
realizing the equivalence $(\Gamma, \rho, \alpha)\simeq (\Gamma,  \rho/f, f\alpha)$. The ``size" of this quotient is a subtle topological question, related to the {\it topology of  kernel distribution} $\tau :=\text{ ker}\,\alpha|_\Gamma$  for the restriction $\alpha|_\Gamma$. 
This kernel $\tau$ is well defined for the 1-form density $\alpha\otimes\rho\wedge \delta_\Gamma$, i.e. for the 
 equivalence classes of triples. For instance,  the space of  vector fields preserving  the distribution $\tau $ differ depending on whether $\tau$ is maximally nonintegrable on $\Gamma$ or, on the contrary, $\tau$ is Frobenius integrable and tangent to a foliation on $\Gamma$. In particular, the latter happens if the restriction $\alpha|_\Gamma$ is a closed 1-form, the case 
corresponding to an isotropic weighted submanifold $(\widetilde \Gamma, \tilde \rho)$ discussed above. 
\end{remark}

\begin{remark}
By considering the lifting map 
$$
(\Gamma, \rho, \alpha)\mapsto (\tilde\Gamma, \tilde\rho)\,,
$$
for the projection $T^*Q\to Q$, coadjoint orbits  of triples in 
$\overline{\vect}^*(Q)$ give rise to coadjoint orbits of weighted submanifolds in $\overline{\ham}^*(T^*Q)$.
In a sense, the symplectic structure on the space of triples occupies an intermediate position between the Weinstein structure on weighted isotropic submanifolds of any dimension $0\le k\le \dim M/2$ of a symplectic manifold $M$ and the Marsden--Weinstein structure on membranes of codimension 2  in a manifold with a volume form.
\end{remark}

\section{Dynamics on membranes and open questions}\label{sec:binormal}

\subsection{The binormal equation and its analogs}
The most relevant Hamiltonian dynamics on curves in 3D associated with the Marsden--Weinstein symplectic structure is the  binormal equation. To define it we fix the Euclidean metric in $\mathbb R^3$
and define the Hamiltonian function on closed curves $\gamma$ to be their length, $H(\gamma)=length(\gamma)$.
Let $\gamma(s)$ is the arc-length parametrization of the curve, and $\gamma':=\partial \gamma/\partial s$.
The following statement is classic by now, see e.g. \cite{ArnoldKh, Calini}:

\begin{proposition}
    The Hamiltonian equation for the length Hamiltonian function and the Marsden--Weinstein symplectic structure  is the binormal equation
    $\partial_t \gamma =\gamma'\times \gamma''$.
\end{proposition}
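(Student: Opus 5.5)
The plan is to derive the binormal equation directly from the definition of the Marsden--Weinstein form in dimension $n=3$, with $\sigma$ the Euclidean volume form. For a curve $\gamma$ and variations $U,V$ along it, $i_Ui_V\sigma$ is the 1-form $w\mapsto \sigma(V,U,w)$, which equals $\det(V,U,w)$ in $\mathbb R^3$. Integrating over $\gamma$ in the arc-length parametrization gives
\[
\Omega^{MW}_\gamma(U,V)=\int_\gamma \det(V,U,\gamma')\,ds=\int_\gamma \langle \gamma'\times V, U\rangle\,ds ,
\]
with the overall sign fixed by the orientation convention in the Definition of $\Omega^{MW}$. The first task is to identify this formula, and in particular to pin down its sign.

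Next I will compute the variation of the length Hamiltonian. Take a variation $\gamma_\epsilon=\gamma+\epsilon U$, parametrized by arc length at $\epsilon=0$. Then $\frac{d}{d\epsilon}|\partial_s\gamma_\epsilon|=\langle\gamma',U'\rangle$. Integrating by parts over the closed curve gives
\[
dH_\gamma(U)=-\int_\gamma\langle \gamma'',U\rangle\,ds .
\]

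The Hamiltonian vector field $X$ is then defined by $\Omega^{MW}_\gamma(X,U)=dH_\gamma(U)$ for all $U$ (or the opposite-sign convention, $i_X\Omega=\pm dH$; I will fix one and track it). Substituting the formula for $\Omega^{MW}$ turns the defining condition into $\langle \gamma'\times X,U\rangle$ paired against $-\langle\gamma'',U\rangle$, up to the sign in front. This yields $\gamma'\times X=\pm\gamma''$, with the sign determined by the conventions above.

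To solve for $X$, write $X=a\gamma'+b$ with $b\perp\gamma'$. Since $|\gamma'|=1$ and $\gamma''\perp\gamma'$, the equation forces $\gamma'\times b=\pm\gamma''$. The solution is $b=\mp\gamma'\times\gamma''$, because $\gamma'\times(\gamma'\times\gamma'')=-\gamma''$. The tangential part $a\gamma'$ is undetermined. This is consistent with the theory: tangential fields lie in the kernel of $\Omega^{MW}$, since they reparametrize the curve and do not change the unparametrized membrane. Hence, modulo reparametrization, $X=\gamma'\times\gamma''$.

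The main obstacle is the bookkeeping of signs and orientations: the order $i_Ui_V$ versus $i_Vi_U$, the sign convention for Hamiltonian fields, and the orientation of $\gamma$. Any error there produces the time-reversed equation $\partial_t\gamma=-\gamma'\times\gamma''$. I will check the sign on a circle: a circle has binormal $\gamma'\times\gamma''$ along its axis, so it must translate along that axis. I will verify against the convention used in \cite{ArnoldKh} that the signs give this direction. A lesser subtlety is that $U$ ranges over all normal fields along $\gamma$, which is legitimate because any such field extends to a divergence-free field on $\mathbb R^3$.
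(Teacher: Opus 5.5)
Your proposal is correct and is the paper's argument written out explicitly for $d=3$. The paper uses ${\rm grad}\,H=-\mathbf{MC}$ (your $-\gamma''$) and notes that $\Omega^{MW}$ acts pointwise as the rotation $J$ by $\pi/2$ in the normal plane (your map $V\mapsto\gamma'\times V$), so ${\rm sgrad}\,H=J(\mathbf{MC})=\gamma'\times\gamma''$ modulo tangential fields. The one caveat is that the overall sign is purely conventional, and your circle test cannot fix it, since it only restates the claimed equation. With $i_Ui_V\sigma=\sigma(V,U,\cdot)$ and $i_X\Omega=dH$, your own formulas give $\gamma'\times X=\gamma''$, hence $X=-\gamma'\times\gamma''$, so the stated sign needs $i_X\Omega=-dH$ or the opposite order in $i_Ui_V$; the paper avoids the issue by not specifying the direction of $J$.
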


It has been generalized in \cite{HV2004, Kh2012} to the motion of codimension 2 closed submanifolds $\Sigma$  in a Riemannian manifold $S$ of dimension $d$ equipped with a volume form $\sigma$: for the Hamiltonian function defined as the $(d-2)$-volume of $\Sigma$,  $H(\Sigma)=vol_{d-2}(\Sigma)$, the Hamiltonian evolution is given by the skew-mean-curvature equation
$$
\partial_t q=J({\bf MC}(q))
$$
for any point $q\in \Sigma$, where $\bf MC$ is the mean curvature vector to $\Sigma $ at $q$ and
$J$ is the operator of rotation by $\pi/2$ in the 2D normal plane $N_q\Sigma$.
This equation is also known to be the LIA (Localized Induction Approximation) ansatz of the Euler equation of an incompressible fluid in $(S,\sigma)$, see \cite{Shashi, Kh2012}. 

\begin{proof}
It is known that the mean curvature vector is the direction of the fastest change 
of the volume of a submanifold, for $H(\Sigma)=vol_{d-2}(\Sigma)$ one has 
${\rm grad} \, H(q)=-{\bf MC}(q)$ for $q\in \Sigma$. The Marsden--Weinstein symplectic structure is the averaging of the symplectic sytucture in the normal bundle to $\Sigma$, and at each point this 
symplectic structure is the rotation by  by $\pi/2$ in the 2D normal plane $N_q\Sigma$, i.e. for the skew-gradient, 
${\rm sgrad} \, H(q)=J({\bf MC}(q))$. (More details can be found in \cite{Kh2012}.) For $d=3$ the skew-mean-curvature equation reduces to the binormal equation above.
\end{proof} 

\begin{remark}
    Recall that vortex sheets $(P,\beta)$ are formed by hypersurfaces $P$ with closed or exact  1-forms $\beta$. Integral submanifolds of $\beta$ define a foliation of $(d-1)$-dimensional $P$ into codimension 1 leaves $\Sigma_\beta$, i.e. into $(d-2)$-dimensional vortex membranes. One can show that the natural Hamiltonian
    $H(P)= \int vol_{d-2}(\Sigma_\beta)\, \beta$ computing the ``vortex sheet's volume" defines the dynamics of the vortex sheet that reduces to
    the skew-mean-curvature evolution of each of its membranes, see \cite{Kh2012}.
\end{remark}

\medskip

Now consider compressible fluid in a manifold $Q$ and the  setting of symplectic structure on 
 augmented triples $(\Gamma, \rho, \alpha)\simeq \alpha\otimes\rho\wedge \delta_\Gamma$. 
 Given a Riemannian metric on $Q$, it turns out that the seemingly natural extension of the above ``volume Hamiltonian" function to this case is not enough to define an evolution. 
 Indeed, the Hamiltonian structure requires the first jet of the Hamiltonian function, or the second jet of the vector field on $\Gamma$, so one needs to define the field not only on $\Gamma$, but  in its infinitesimal neighborhood in $Q$.

Note however a similarity with the incompressible setting of vortex sheets:  
 the assumption of closedness of  the restriction 1-form  $\alpha_\Gamma:=\alpha|_\Gamma$ also implies that $\Gamma$ is foliated by the integral 
 submanifolds of $\alpha_\Gamma$. 
Furthermore, while  $\alpha$ and $\rho$ are defined modulo multiplication by $f\in C^\infty(\Gamma)$
such a function $f$ must be constant on the leaves of $\alpha_\Gamma$, since $f\alpha_\Gamma$ also must be closed (indeed, $0=df\alpha_\Gamma = df\wedge\alpha_\Gamma + fd\alpha_\Gamma = df\wedge\alpha_\Gamma$). 
In particular, this defines the volume form on each leaf modulo a constant factor. This might help 
to introduce an appropriate Hamiltonian with an interesting dynamics.

One of possible choices for a Hamiltonian function would be to mimic the one for barotropic fluids: 
given a Riemannian metric on $Q$, which induces a reference volume $k$-form 
$d^k\mathrm{vol}$ on $\Gamma\subset Q$, define a vector field $v$ by $v^\flat \otimes d^k\mathrm{vol} = \alpha\otimes\rho$, and then set the Hamiltonian to be $H(\Gamma,\rho,\alpha) = \int_\Gamma \langle v, \alpha\rangle \rho$. 

\medskip

  Here are several open questions related to the introduction of symplectic structures on the augmented triples.

1) What could be an interesting (and possible integrable in lower dimensions) analog of the binormal equation for membranes in this compressible setting?

2) Is it possible to obtain such an analog of the binormal equation by means of a reduction or ansatz from 
  the compressible Euler equation on $Q$, similar to the localized induction approximation in the incompressible setting?
  
3) What could be an extension of the above symplectic structures to the semidirect product groups 
${\rm Diff}_\sigma(S)\ltimes C^\infty(S)$ and ${\rm Diff} (Q)\ltimes C^\infty(Q)$.

  4) The dual pairs discussed by Marsden--Weinstein \cite{MW1983} and (implicitly) by Donaldson \cite{Do1999}
imply the Morita equivalence of the corresponding manifolds. What would this equivalence give for the coadjoint orbits of the ``singular elements'' and for their quantization?

\subsection{Low-dimensional cases}
It is worth having a closer look  at several low-dimensional cases. 

\medskip

$a)$ The case of $k=0$ for the 
orbits of zero-dimensional submanifolds was described above, see  Example \ref{ex:zero-dim}. 
Here  $(\Gamma, \rho)=\sum_i a_i \, q_i$ for $q_i\in Q$ with strength $a_i\in \mathbb R$, and after
the lift to the symplectic space $T^*Q$,  it corresponds to the product of several copies 
of $T^*Q$ with the symplectic structure $\sum a_i\, dp_i\wedge dq_i$, cf. \cite{MW1983}.
\medskip

$b)$ The case of $k=1$ in a one-dimensional manifold $Q$, $n=1$, corresponds to a smooth, rather than singular, functional on $\vect\,Q$, and yet it is interesting how it fits the main framework. Let $Q=S^1$ and recall that for the Lie algebra $\mathfrak g=\vect(S^1)$ the regular dual is regarded to be the space of smooth quadratic differentials, $\mathfrak g^*=QD(S^1)$. 
There is the natural Lie-Poisson structure on $\mathfrak g^*=QD(S^1)$.

Now suppose $\Gamma=Q=S^1$ and $\rho$ and $\alpha$ are 1-forms on $\Gamma$. Then the triple $(\Gamma, \rho, \alpha)$ can be identified with the quadratic differential $\alpha\otimes\rho\in QD(S^1)$, and the Lie-Poisson 
structure indices the symplectic structure on the coadjoint orbit of $\alpha\otimes\rho$.
In particular, the structure of zeros of $\rho$ and $\alpha$ will affect the description of Casimirs 
in the vicinity of the coadjoint orbit $\mathcal O_{\alpha\otimes\rho}$.

\medskip

$c)$ In the case of a curve $\Gamma$, $k=1$, in a two-dimensional manifold $Q$, $n=2$, consider a 1-form $\rho$ on the curve $\Gamma$ and a 1-form $\alpha$ on $Q$ defined at $\Gamma$. In this case the 1-form density $(\Gamma, \alpha\otimes\rho)$ (or the
triple $(\Gamma, \rho, \alpha)$) is a linear functional on vector fields 
$v\in \mathfrak g=\vect(Q)$ as
$\langle (\Gamma, \alpha\otimes\rho), v\rangle=\int_\Gamma \rho\wedge \alpha(v)$. 
The corresponding Kirillov--Kostant symplectic structure is (somewhat implicitly) defined by
$\int_\Gamma \rho\wedge \alpha([u,v])$ for the commutator $[u,v]$ of two vector fields $u$ and $v$ on $Q$. This 1D case was discussed in more details in  \cite{HV2025}.

This can be compared with the symplectic structure on vortex sheets $(P,\beta)$ in the incompressible setting on $(S,\sigma)$,
where the vortex sheet (here, a curve $P\subset S$) is equipped with a 1-form (here, the 1-form $\beta$ on $P$), see Definition~\ref{def:vs}.
In that case the pairing with divergence-free vector fields in $(S,\sigma)$ is essentially given by their flux through $P\subset S$, so it  does not need fixing the 1-form $\alpha$ 
but relies on the choice of the volume form $\sigma$ on $S$. 

\medskip

$d)$ For a membrane, a submanifold of codimension 2 in $Q$, we have ${\rm dim }\,Q=n$ and ${\rm dim }\,\Gamma =n-2$,
and $\rho$ is a density $(n-2)$-form on $\Gamma$, while $\alpha$ is, as above, 1-form on $Q$ at $\Gamma$.
The corresponding Kirillov--Kostant symplectic structure is defined by the same formula
$\int_\Gamma \rho\wedge \alpha([u,v])$ for vector fields $u$ and $v$ on $Q$. 

Note that in the compressible setting a submanifold $\Gamma$ can be of any dimension $k$ in $n$-dimensional $Q$, and therefore 
analogues of vortex membranes can occur in any codimension!


\section{Appendix: Symplectic manifold of submersions to a surface}\label{sect:reduction}


Here we present in detail a way to obtain the Marsden-Weinstein symplectic form for spaces of vortex membranes 
and vortex sheets from the Donaldson construction
of mappings to a surface, hinted in Remarks~\ref{rem:MW-filaments} and \ref{rem:MW-sheets}.
We start by using families of vortex membranes defined by a submersion
to obtain linear functionals on the space $\vect_\sigma(S)$ of (exact) divergence-free vector fields in $n$-dimensional $S$ with a volume form $\sigma$.

Let $\mathcal M:=\Sub(S, M)$ denote the space of proper submersions $f:(S,\sigma)\to (M, \omega)$, where
$M$ is a surface endowed with symplectic form $\omega$.
The 2-form $\Omega$  in Equation~\eqref{eq:Omega_f},
defined by  Donaldson's construction, is a symplectic form on  $\Sub(S, M)$.
The momentum map for the natural $\Diff_\sigma(S)$-action on $\Sub(S,M)$ is \cite{MW1983}
\begin{equation}\label{jsub}
J \,:\, \Sub(S,M)\to \vect_\sigma(S)^*,\quad
\langle J(f),v\rangle=\int_S\alpha\wedge f^*\omega,
\end{equation}
for a potential $(n-2)$-form $\alpha$ for the divergence-free vector field $v$.

Any proper submersion $f\in\Sub(S, M)$ determines a codimension 2 fibration of $S$  with compact fibers $f^{-1}(q)$ for $q\in M$.
Thus we get a smooth map
\[
\Sigma:\Sub(S,M)\to C^\infty(M,\mathcal C),\quad\Sigma(f)(q)=f^{-1}(q),
\]
which associates to a proper submersion $f$ a family of membranes parametrized by points of $M$.

Recall that $\mathcal C$, the space of codimension 2 closed oriented submanifolds of $S$,
can be endowed with the Marsden-Weinstein symplectic form $\Omega^{MW}$. 
Thus, $C^\infty(M,\mathcal C)$ carries a natural symplectic form, which can be regarded as a version of  Donaldson's construction
for maps into an infinite-dimensional symplectic manifold $(\mathcal C, \Omega^{MW})$:
\begin{equation}\label{boldo}
{\bf \Omega}_F(u_F,v_F)=\int_M\Omega^{MW}(u_F,v_F)\omega=\int_M(\int_{F(q)}i_{u_F(q)}i_{v_F(q)}\sigma)\,\omega(q),
\end{equation}
for $F\in C^\infty(M,\mathcal C)$ and tangent vectors $u_F,v_F\in \Gamma(F^*T\mathcal C)$,
which means that $u_F(q),v_F(q)\in T_{F(q)}\mathcal C=\Gamma(T^\perp F(q))$. 
That $C^\infty(M,\mathcal C)$ is also a smooth manifold follows by \cite[Section 42.4]{KM}.

We notice that the map $\Sigma$ takes values in the manifold of embeddings  $\Emb(M,\mathcal C)$.
Indeed, each $F:=\Sigma(f)$ is an embedding because the derivative of $F$ at $q$ maps a vector $w_q\in T_qM$ 
to the  element of $T_{f^{-1}(q)}\mathcal C$ given by the normal vector field along $f^{-1}(q)$ that is canonically defined by $w_q$.
In particular $T_qF$ is injective, so $F$ is an immersion. It's not difficult to see that $F$  is a homeomorphism onto its image
(with inverse $f^{-1}(q)\mapsto q$), so $F$ is also an embedding.

\begin{theorem}\label{PS}
The map $\Sigma:\Sub(S,M)\to \Emb(M,\mathcal C)$ is symplectic: 
$$
\Sigma^*{\bf \Omega}=\Omega.
$$
\end{theorem}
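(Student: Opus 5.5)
The plan is a direct pointwise computation followed by a fiberwise (coarea) integration. We compare the two forms at a fixed proper submersion $f$. Take tangent vectors $u,v\in T_f\Sub(S,M)$; these are sections of $f^*TM$, i.e.\ maps $u,v\colon S\to TM$ with $u(s),v(s)\in T_{f(s)}M$.

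\textbf{Step 1: the differential of $\Sigma$.} We must describe $T_f\Sigma(u)\in\Gamma(F^*T\mathcal C)$, where $F=\Sigma(f)$. For each $q\in M$, the variation $f_t$ with $\dot f_0=u$ moves the fiber $f_t^{-1}(q)$. Differentiating $f_t(x_t)=q$ for $x_t\in f_t^{-1}(q)$ gives $Tf(\dot x)=-u(x)$. The normal velocity of the fiber $f^{-1}(q)$ is therefore the unique normal vector field $U_q$ along $f^{-1}(q)$ with
\[
Tf\,(U_q)=-u \quad\text{on } f^{-1}(q).
\]
This vector field is well defined modulo tangent vectors to the fiber. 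Since $f$ is a submersion, $Tf$ restricted to a normal complement is an isomorphism onto $T_qM$. Define $V_q$ in the same way from $v$. Thus $T_f\Sigma(u)(q)=U_q$, and by \eqref{boldo}
\[
\Sigma^*{\bf\Omega}_f(u,v)=\int_M\Big(\int_{f^{-1}(q)} i_{U_q}i_{V_q}\sigma\Big)\omega(q).
\]
The integrand does not depend on the choice of the lifts $U_q,V_q$: changing a lift by a vector tangent to the fiber changes $i_{U_q}i_{V_q}\sigma$ by a term that vanishes on restriction to the $(n-2)$-dimensional fiber.

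\textbf{Step 2: the coarea (fiber integration) identity.} Near a fiber, write $\sigma = \lambda\wedge f^*\omega$ locally, where $\lambda$ is an $(n-2)$-form. Its restriction to each fiber is canonical, namely the fiber density $\sigma/f^*\omega$. For any $n$-form of the type $g\,\sigma$ we then have the fiber-integration formula
\[
\int_S g\,\sigma=\int_M\Big(\int_{f^{-1}(q)} g\,\lambda\Big)\omega(q).
\]
It remains to compare the inner integrands. On a fiber,
\[
(i_{U}i_{V}\sigma)|_{\mathrm{fiber}}=\big(i_Ui_V(\lambda\wedge f^*\omega)\big)|_{\mathrm{fiber}}=(f^*\omega)(V,U)\,\lambda|_{\mathrm{fiber}}.
\]
Here every term in which $i_U$ or $i_V$ hits $\lambda$ still contains a factor of $f^*\omega$ evaluated on at most one vector, and $f^*\omega$ vanishes on fiber-tangent vectors. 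Moreover
\[
(f^*\omega)(V,U)=\omega(Tf\,V,Tf\,U)=\omega(-v,-u)=\omega(v,u).
\]

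\textbf{Step 3: conclusion and main obstacle.} Combining Steps 1 and 2, $\Sigma^*{\bf\Omega}_f(u,v)=\pm\int_S\omega(u,v)\sigma=\pm\,\Omega_f(u,v)$. The remaining task is to fix the sign conventions: the order of contraction in $i_Ui_V$, and the orientation of the fibers induced from $\sigma$ and $\omega$ (the fiber is oriented so that $\lambda\wedge f^*\omega=\sigma$ is positive). With the conventions chosen consistently, the sign is $+$. I expect this bookkeeping, namely the contraction order, the fiber orientation, and the sign in $Tf(U)=-u$, to be the main obstacle; the analytic content reduces to Fubini along the fibration. Smoothness of $\Sigma$ and the manifold structure of $C^\infty(M,\mathcal C)$ are taken from \cite{KM}, as in the text.
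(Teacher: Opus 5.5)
Your argument follows the paper's route: identify $T_f\Sigma(u)$ with the normal lift of $u$ along each fiber, then integrate fiberwise. Your Step~2 makes explicit the pointwise identity behind the paper's last equality $\int_S i_{u_f}i_{v_f}\sigma\wedge f^*\omega=\Omega_f(u_f,v_f)$. The one loose end is the sign, which you leave as $\pm$ and then declare to be $+$. In fact your own computation already fixes it, and two of the three sources you list play no role.

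\textbf{The sign-in-$Tf$ and orientation issues.} The minus sign in $Tf(U)=-u$ enters twice and cancels; you write this yourself as $\omega(-v,-u)=\omega(v,u)$. The fiber orientation is not a free choice either. It must be the one for which your formula $\int_S g\,\sigma=\int_M\bigl(\int_{f^{-1}(q)}g\,\lambda\bigr)\omega(q)$ holds, which is also the Fubini identity the paper needs later for $A\circ\Sigma=J$.

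\textbf{The contraction convention decides the sign.} What remains is only how to read $i_Ui_V\sigma$. Your formula $(i_Ui_V\sigma)|_{\mathrm{fiber}}=(f^*\omega)(V,U)\,\lambda|_{\mathrm{fiber}}$ uses $i_Ui_V\sigma=i_U(i_V\sigma)=\sigma(V,U,\dots)$. With that reading, Steps~1 and~2 give
\[
\Sigma^*{\bf\Omega}_f(u,v)=\int_S\omega(v,u)\,\sigma=-\Omega_f(u,v).
\]
A local check: take $f(x,y,z)=(x,y)$ on $\mathbb R^3$, $\sigma=dx\wedge dy\wedge dz$, $\omega=dx\wedge dy$, $U=\partial_x$, $V=\partial_y$. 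Then the fibers are oriented by $\lambda=dz$ and $i_U(i_V\sigma)=-dz$, while $\omega(u,v)=1$.

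So $\Sigma^*{\bf\Omega}=\Omega$ holds exactly when the Marsden--Weinstein integrand is read as $\sigma(U,V,\dots)$. Equivalently, it holds when $i_Ui_V\sigma\wedge f^*\omega=\omega(u,v)\,\sigma$, which is the reading the paper's final equality uses without comment. You should state that convention explicitly, or conclude $\Sigma^*{\bf\Omega}=-\Omega$ under the composition convention, rather than defer the sign to unspecified ``consistent'' bookkeeping. With that fixed, your proof is complete.
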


\begin{proof}
We start by computing  $T_f\Sigma(u_f)$ for $u_f\in
\Gamma(f^*TM)$.
At $x\in f^{-1}(q)$, the vector $u_f(x)\in T_qM$ can be identified  with a vector in $T^\perp_xf^{-1}(q)$ via the submersion $f$.
Under this identification, $u_f$ becomes an element of $T_f\Sigma(u_f)\in T_{\Sigma(f)}\Emb(M,\mathcal C)$, namely
\begin{equation}\label{ubar}
\bar u_f:q\in M\mapsto u_f|_{f^{-1}(q)}^\perp\in \Gamma(T^\perp f^{-1}(q)) = T_{f^{-1}(q)}\mathcal C.
\end{equation}
Now the result follows from the direct calculation:
\begin{align*}
(\Sigma^*{\bf \Omega})_f(u_f,v_f)&={\bf \Omega}_{\Sigma(f)}(\bar u_f,\bar v_f)
=\int_M(\int_{f^{-1}(q)}i_{u_f|_{f^{-1}(q)}^\perp}i_{v_f|_{f^{-1}(q)}^\perp}\sigma)\,\omega\\
&=\int_Si_{u_f}i_{v_f}\sigma\wedge f^*\omega=\Omega_f(u_f,v_f),
\end{align*}
 for $u_f,v_f\in\Gamma(f^*TM)$.
\end{proof}
\medskip

We define the averaging map over the symplectic manifold $(M, \omega)$ by
\begin{equation}\label{biga}
A: \Emb(M,\mathcal C)\to \overline\vect_\sigma(S)^*,\quad \langle A(F),X_\alpha\rangle=\int_M\left (\int_{F(q)}\alpha\right )\omega(q)\,,
\end{equation}
where $X_\alpha\in \vect_\sigma(S)$ is an exact vector field corresponding to a $(n-2)$-form $\alpha$: $\iota_{X_\alpha}\sigma=d\alpha$.
With the  $\Diff_\sigma(S)$-equivariant inclusion \eqref{flux}, rewritten as
\begin{equation}\label{smallj}
j:\mathcal C\to\overline\vect_\sigma(S)^*,\quad \langle j(\Sigma),X_\alpha\rangle=\int_\Sigma \alpha,
\end{equation}
the map \eqref{biga} takes the form $A(F)=\int_M(j\circ F)\omega$.

\begin{proposition}\label{LG}
The natural $\Diff_\sigma(S)$ action  on $(\Emb(M,\mathcal C),\bf\Omega)$ is Hamiltonian and the averaging map $A$ from \eqref{biga}
is a  $\Diff_\sigma(S)$-equivariant momentum map.
\end{proposition}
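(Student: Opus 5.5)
We need to prove Proposition \ref{LG}: the $\Diff_\sigma(S)$-action $\varphi\cdot F=\varphi\circ F$ on $\Emb(M,\mathcal C)$ (acting on each membrane) preserves ${\bf\Omega}$, and $A$ is an equivariant momentum map for it. The plan is to reduce everything pointwise in $q\in M$ to the corresponding facts for the single-membrane space $(\mathcal C,\Omega^{MW})$, and then integrate against $\omega$. The point is that ${\bf\Omega}$ and $A$ are both obtained from $\Omega^{MW}$ and $j$ by the same ``averaging over $(M,\omega)$'' that Donaldson's construction performs.

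\emph{Step 1: symplecticity.} For $\varphi\in\Diff_\sigma(S)$, the tangent map sends $u_F(q)\in\Gamma(T^\perp F(q))$ to $\varphi_*u_F(q)$ along $\varphi(F(q))$. Since $\varphi^*\sigma=\sigma$, we have $\int_{\varphi(F(q))}i_{\varphi_*u}i_{\varphi_*v}\sigma=\int_{F(q)}i_ui_v\sigma$ for each $q$. Thus $\Diff_\sigma(S)$ preserves $\Omega^{MW}$, and integrating over $M$ with $\omega$ gives invariance of ${\bf\Omega}$ by \eqref{boldo}.

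\emph{Step 2: the momentum map identity.} The key input is the single-membrane statement that the inclusion $j$ of \eqref{smallj} is an equivariant momentum map for the $\Diff_\sigma(S)$-action on $(\mathcal C,\Omega^{MW})$. This follows from Proposition~\ref{prop:MWvsKK}, since $j$ identifies $\mathcal C$ with coadjoint orbits and the Kirillov--Kostant form makes the inclusion of an orbit a momentum map. We can also verify it directly. The infinitesimal generator of $X_\alpha$ at $\Sigma$ is $X_\alpha|_\Sigma$. For a variation $W$ of $\Sigma$, Cartan's formula gives
\[
d\langle j,X_\alpha\rangle(W)=\int_\Sigma i_W d\alpha=\int_\Sigma i_W i_{X_\alpha}\sigma=\Omega^{MW}_\Sigma(X_\alpha,W),
\]
with the sign fixed by the orientation convention. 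Then for $F\in\Emb(M,\mathcal C)$ and a tangent vector $w_F$, we differentiate under the integral:
\[
d\langle A,X_\alpha\rangle_F(w_F)=\int_M d\langle j,X_\alpha\rangle_{F(q)}(w_F(q))\,\omega(q)=\int_M\Omega^{MW}(X_\alpha|_{F(q)},w_F(q))\,\omega={\bf\Omega}_F(X_\alpha\cdot F,w_F).
\]
This holds because the generator on $\Emb(M,\mathcal C)$ is $q\mapsto X_\alpha|_{F(q)}$, and gives the identity of Definition~\ref{def:momentum_map}.

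\emph{Step 3: equivariance.} Since $j$ is $\Diff_\sigma(S)$-equivariant, $A(\varphi\circ F)=\int_M (j\circ\varphi\circ F)\,\omega=\int_M\operatorname{Ad}^*_{\varphi^{-1}}(j\circ F)\,\omega=\operatorname{Ad}^*_{\varphi^{-1}}A(F)$, by linearity of the coadjoint action. Alternatively, equivariance follows from invariance of the pairing, $\int_{\varphi(\Sigma)}\alpha=\int_\Sigma\varphi^*\alpha$ together with $\varphi^*X_\alpha=X_{\varphi^*\alpha}$.

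The main obstacle is Step 2's single-membrane computation: checking that $d\alpha$ restricted to $\Sigma$, paired with a normal variation $W$, gives exactly $i_Wi_{X_\alpha}\sigma$ up to the correct sign. This requires Stokes/Cartan on the swept $(n-1)$-chain and the fact that tangential components of $W$ contribute nothing. A second point is justifying differentiation under $\int_M$ in the convenient setting of \cite{KM}. I would handle it by noting that all quantities are smooth in $(q,F)$ and $M$ is compact, or fiberwise proper; the paper's standing smoothness conventions should suffice. Finally, as a consistency check, $A\circ\Sigma=J$ from \eqref{jsub}, via the fiber integration $\int_M(\int_{f^{-1}(q)}\alpha)\omega=\int_S\alpha\wedge f^*\omega$. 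Together with Theorem~\ref{PS}, this confirms the signs.
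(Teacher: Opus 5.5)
Your proposal is correct and takes essentially the same route as the paper. The paper writes ${\bf\Omega}=\widehat{\Omega^{MW}\cdot\omega}$ and $A=\widehat{j\cdot\omega}$ in Vizman's hat calculus and uses that contraction with $\overline{v_{\mathcal C}}$ and $d$ commute with the hat, which is exactly your pointwise reduction to $i_{v_{\mathcal C}}\Omega^{MW}=d\langle j,v\rangle$ followed by differentiation under $\int_M$; it likewise gets equivariance from that of $j$. The only difference is that you check the single-membrane identity and the invariance of ${\bf\Omega}$ directly where the paper takes them as known, and the sign you flag there is a convention issue the paper also leaves unaddressed.
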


\begin{proof}
For the proof it is convenient to use the hat calculus from \cite{Vizman2011}. 
 (Recall, that the hat pairing takes a $p$-form on $P$ and a $q$-form on $Q$, and produces a $(p+q-k)$-form on the function manifold 
 $C^\infty(Q,P)$, where $k$ is the dimension of $Q$. It is defined using fiber integration (push-forward) over the domain manifold $Q$ via evaluation and projection maps.) 
 With this notation the symplectic form  in \eqref{boldo} becomes
${\bf \Omega}:=\widehat{\Omega^{MW}\cdot\omega}$. Moreover, the averaging map in \eqref{biga} becomes $A=\widehat{j\cdot \omega}$,
where the function $\langle\widehat{ j\cdot\omega},v\rangle$ on $\Emb(M,\mathcal C)$ is given by
$$
\langle\widehat{ j\cdot\omega},v\rangle(F):=\widehat{(\langle j,v\rangle\cdot\omega)}(F)= \langle A(F),v\rangle
$$
for every $v\in\vect_\sigma(S)$.
We write the infinitesimal generator of $v$ on $\Emb(M,\mathcal C)$ as $v_{\Emb}=\overline{ v_{\mathcal C}}$,
where $v_{\mathcal C}$ denotes the infinitesimal generator on $\mathcal C$,
meaning that $v_{\Emb}(F)=\overline{ v_{\mathcal C}}\circ F$.
The computation
\begin{align*}
i_{v_{\Emb}}{\bf\Omega}&=i_{\overline{ v_{\mathcal C}}}\widehat {(\Omega^{MW}\cdot\omega)}
=(i_{v_{\mathcal C}}\Omega^{MW}\cdot\omega)\widehat{\ }
={(d\langle j,v\rangle\cdot\omega)}\widehat{\ }
=d\widehat{(\langle j,v\rangle\cdot\omega)}\\
&=d\langle\widehat{ j\cdot\omega},v\rangle=d\langle A,v\rangle.
\end{align*}
shows that $A$ is a momentum map.

The  $\Diff_\sigma(S)$-equivariance of $A$ follows directly from the  $\Diff_\sigma(S)$-equivariance of $j$.
\end{proof}

Because $\Sigma$ is a symplectic $\Diff_\sigma(S)$-equivariant map and $A$ is a momentum map,
the composition $A\circ\Sigma$ is a momentum map too.
We discover that $A\circ\Sigma$ is exactly the momentum map  $J$ from \eqref{jsub}.
For this we compute 
\[
\langle J(f),v\rangle=\int_M\langle j\circ \Sigma(f),v\rangle\,\omega=\int_M\left(\int_{f^{-1}(q)}\alpha\right)\,\omega(q).
\]
Now, by the Fubini theorem for the (proper) submersion $f$,
we have that $\int_S\alpha\wedge f^*\omega=\int_M(\int_{f^{-1}(q)}\alpha)\,\omega(q)$.
Thus $J(f)$ is nothing else but the regular momentum characterized by the closed 2-form $f^*\omega$ on $S$.

In conclusion, the map $\Sigma$ provides a link between the $\vect_\sigma(S)^*$-valued momentum map $J$ and  the
$\overline\vect_\sigma(S)^*$-valued momentum map $A$:
$$
\begin{tikzcd}[row sep=large, column sep=large]
\Sub(S,M) \arrow[r, "\Sigma"] \arrow[d,"J" '] & \Emb(M,\mathcal C) \arrow[d,"A"] \\
 \vect_\sigma(S)^* \arrow[r,hook]           &  \overline\vect_\sigma(S)^*
\end{tikzcd}
$$
In turn,  the Lie-Poisson structure on $\overline\vect_\sigma(S)^*$ and on its special coadjoint orbits is responsible for the Marsden-Weinstein symplectic structure on the space of membranes $\mathcal C$, as discussed in Remark~\ref{rem:MW-filaments}. 

\begin{remark}
A similar diagram furnishes details for the symplectic structure on the space of vortex sheets, see Remark~\ref{rem:MW-sheets}. 
It holds when replacing $\Sigma$  by $\iota^*\Sigma:\Sub(S,M)\to \Emb(L,\mathcal C)$ for $\iota:L\hookrightarrow M$ 
and the averaging map $A$ by the momentum map
$$
A_{L,\lambda}:\Emb(L,\mathcal C)\to\overline\vect_\sigma(S)^*,\quad F\mapsto \int_L(j\circ F)\lambda.
$$
In particular, $J$ has to be replaced by the vortex sheet map
$$
J_{L,\lambda}:\Sub(S,M)\to\overline\vect_\sigma(S)^*,\quad f\mapsto (f^{-1}(L),f^*\lambda),
$$
which is a Poisson map, but not a momentum map.
\end{remark}

\bibliographystyle{amsplain}
\bibliography{bibliography}

@article{Do1999,
	author = {Donaldson, Simon K.},
	journal = {Asian J. Math.},
	number = {1},
	pages = {1--16},
	title = {Moment maps and diffeomorphisms},
	volume = {3},
	year = {1999}}

@article{We1990,
	author = {Weinstein, Alan},
	journal = {Adv. Math.},
	number = {2},
	pages = {133--159},
	title = {Connections of {B}erry and {H}annay type for moving {L}agrangian submanifolds},
	volume = {82},
	year = {1990}}

@article{MW1983,
title = {Coadjoint orbits, vortices, and {C}lebsch variables for incompressible fluids},
journal = {Physica D: Nonlinear Phenomena},
volume = {7},
number = {1},
pages = {305-323},
year = {1983},
issn = {0167-2789},
author = {Jerrold Marsden and Alan Weinstein}
}

@article{Kh2012,
  author    = {Khesin, Boris},
  title     = {Symplectic structures and dynamics on vortex membranes},
  journal   = {Moscow Mathematical Journal},
  volume    = {12},
  number    = {2},
  pages     = {413--434},
  year      = {2012}
}

@article{Shashi,
  author    = {Shashikanth,  Banavara N.},
  title     = {Vortex dynamics in ${R}^4$},
  journal   = {J. Math. Phys.},
  volume    = {53},
  number    = {1},
  pages     = {013103},
  year      = {2012}
}

@book{ArnoldKh,
  title={Topological Methods in Hydrodynamics},
  author={Arnold, Vladimir I. and Khesin, Boris A.},
  volume={125},
  year={2021},
  publisher={Springer},
  series={Applied Mathematical Sciences},
  address={Cham, Switzerland},
  edition={2nd},
  isbn={978-3-030-74277-5},
  doi={10.1007/978-3-030-74278-2}
}

@book{KM,
  title={The Convenient Setting of Global Analysis},
  author={Kriegl, Andreas and Michor, Peter W.},
  volume={53},
  year={1997},
  publisher={AMS},
  series={Mathematical Surveys and Monographs},
  address={},
  edition={first},
  isbn={},
  doi={}
}

@article{GBV12,
  author    = {Gay-Balmaz, Fran{\c{c}}ois and Vizman, Cornelia},
  title     = {Dual pairs in fluid dynamics},
  journal   = {Annals of Global Analysis and Geometry},
  year      = {2012},
  volume    = {41},
  number    = {1},
  pages     = {1--24},
  doi       = {10.1007/s10455-011-9267-z},
  url       = {https://doi.org/10.1007/s10455-011-9267-z}
}

@article{Calini,
  author    = {Calini, Annalisa and Ivey, Thomas},
  title     = {Finite-Gap Solutions of the Vortex Filament Equation: Isoperiodic Deformations},
  journal   = {Journal of Nonlinear Science},
  year      = {2007},
  volume    = {17},
  number    = {6},
  pages     = {527--567},
  doi       = {10.1007/s00332-007-9002-x},
  url       = {https://doi.org}
}

@article{Kh2012Gafa,
  author = {Khesin, Boris},
  title = {Dynamics of symplectic fluids and point vortices},
  journal = {Geometric and Functional Analysis},
  year = {2012},
  volume = {22},
  number = {5},
  pages = {1444--1459},
  doi = {10.1007/s00039-012-0182-7},
  url = {https://doi.org}
}

@article{Lee_2009,
	doi = {10.3842/sigma.2009.099},
	url = {https://doi.org},
	year = 2009,
	month = {nov},
	volume = {5},
	pages = {099},
	author = {Brian Lee},
	title = {Geometric Structures on Spaces of Weighted Submanifolds},
	journal = {SIGMA (Symmetry, Integrability and Geometry: Methods and Applications)}
}

@article{Vizman2011,
  author    = {Vizman, Cornelia},
  title     = {Induced differential forms on manifolds of functions},
  journal   = {Archivum Mathematicum},
  volume    = {47},
  number    = {3},
  pages     = {201--215},
  year      = {2011},
  url       = {http://eudml.org/doc/247143}
}

@article{HV2025,
  author    = {Haller, Stefan and Vizman, Cornelia},
  title     = {A Dual Pair for the Group of Volume Preserving Diffeomorphisms},
  journal   = {International Mathematics Research Notices},
  year      = {2025},
  volume    = {2025},
  number    = {13},
  pages     = {rnaf181},
  month     = {July},
  doi       = {10.1093/imrn/rnaf181},
  url       = {https://doi.org/10.1093/imrn/rnaf181}
}

@article{HV2004,
  author  = {Haller, Stefan and Vizman, Cornelia},
  title   = {Non-linear {G}rassmannians as coadjoint orbits},
  journal = {Mathematische Annalen},
  volume  = {329},
  number  = {4},
  pages   = {771--785},
  year    = {2004},
  note    = {non-abriged version in arXiv:math/0305089}
}

@article{GBV2024,
  author   = {Gay-Balmaz, Fran{\c{c}}ois and Vizman, Cornelia},
  title    = {Coadjoint orbits of vortex sheets in ideal fluids},
  journal  = {Journal of Geometry and Physics},
  volume   = {197},
  pages    = {105096},
  year     = {2024},
  month    = mar,
  doi      = {10.1016/j.geomphys.2023.105096}
}

\end{document}